\newtheorem{theorem}{Theorem}[section]
\newtheorem{lemma}[theorem]{Lemma}
\theoremstyle{definition}
\newtheorem{definition}[theorem]{Definition}
\theoremstyle{remark}
\newtheorem{remark}[theorem]{Remark}
\numberwithin{equation}{section}
\def\intslash{\rlap{\kern  .32em $\mspace {.5mu}\backslash$ }\int}
\def\qsl{{\rlap{\kern  .32em $\mspace {.5mu}\backslash$ }\int_{Q_x}}}
\def\Re{\operatorname{Re\,}}
\def\Im{\operatorname{Im\,}}
\def\M{{\mathcal M}}
\def\norm#1{{ \left|  #1 \right| }}
\def\set#1{{ \left\{ #1 \right\} }}
\def\Bone{{\textbf{1}}}
\newcommand{\Be}{\begin{equation}}
\newcommand{\Ee}{\end{equation}}
\newcommand{\Bes}{\begin{equation*}}
\newcommand{\Ees}{\end{equation*}}
\newcommand{\Bsp}{\begin{split}}
\newcommand{\Esp}{\end{split}}
\newcommand{\Bm}{\begin{multline}}
\newcommand{\Em}{\end{multline}}
\newcommand{\Bea}{\begin{eqnarray}}
\newcommand{\Eea}{\end{eqnarray}}
\newcommand{\Beas}{\begin{eqnarray*}}
\newcommand{\Eeas}{\end{eqnarray*}}
\newcommand{\Benu}{\begin{enumerate}}
\newcommand{\Eenu}{\end{enumerate}}
\newcommand{\Bi}{\begin{itemize}}
\newcommand{\Ei}{\end{itemize}}
\newcommand{\supp}{\operatorname{supp}}
\newcommand{\mN}{\operatorname{\mathcal{N}}}
\newcommand{\mM}{\operatorname{\mathcal{M}}}
\newcommand{\mS}{\operatorname{\mathcal{S}}}
\newcommand{\mQ}{\operatorname{\mathcal{Q}}}
\newcommand{\mP}{\operatorname{\mathcal{P}}}
\newcommand{\R}{\operatorname{\mathbb{R}}}
\newcommand{\C}{\operatorname{\mathbb{C}}}
\newcommand{\Z}{\operatorname{\mathbb{Z}}}
\newcommand{\N}{\operatorname{\mathbb{N}}}
\newcommand*{\dif}{{\rm d}}
\def\set #1{ \left\{ #1 \right\} }
\def\norm #1{ \left\| #1 \right\| }
\def\lbracket #1{ \left( #1 \right) }
    \newenvironment{acknowledgements}{\begin{acknowledgments}}{\end{acknowledgments}}
    \newenvironment{acknowledgements}{\section*{Acknowledgements}}{\par}
\begin{document}

\title[Noncommutative maximal operators]{Noncommutative weak type $(1,1)$ estimates for Calder\'on-Zygmund operators with a class of $L_1$-integral conditions}

\author{Xudong Lai}
\address{Xudong Lai: Institute for Advanced Study in Mathematics\\
Harbin Institute of Technology\\
Harbin
150001\\
China;
Zhengzhou Research Institute\\
Harbin Institute of Technology\\
Zhengzhou
450000\\
China}
\email{xudonglai@hit.edu.cn}

\author{Lingxin Xu}
\address{Lingxin Xu: Institute for Advanced Study in Mathematics, Harbin Institute of Technology, Harbin, 150001, China}
\email{xulingxin97@163.com}

\thanks{This work is supported by National Natural Science Foundation of China (No. 12322107, No. 12271124, and No. W2441002) and Heilongjiang Provincial Natural Science Foundation of China (No. YQ2022A005).}




\keywords{Noncommutative Calder\'on-Zygmund decomposition, Noncommutative Calder\'on-Zygmund operators}

\begin{abstract}
We construct a slightly new noncommutative Calder\'on-Zygmund decomposition by further splitting the bad function.
Using this tool, we prove the weak type (1,1) boundedness of noncommutative Calder\'on-Zygmund operators under a class of $L_1$-integral conditions, which are close to $L_1$-Dini conditions.

\end{abstract}

\maketitle

\section{Introduction}

Calder\'on-Zygmund operators, a well-known class of singular integral operators, were introduced by Calder\'on and Zygmund \cite{Calderon1952On} in their seminal 1952 work.
The kernel of such an operator satisfies specific conditions, including the size and regularity conditions, the latter of which was refined by H\"ormander \cite{Hormander1960Estimates} in 1960.

The standard nonconvolution type Calder\'on-Zygmund operator is defined as
    \[
    Tf(x) = \int_{\R^d} K(x,y) f(y) \dif y, \quad x\notin \supp f,
    \]
where $K:\R^d \times \R^d \setminus \set{(x,x): x\in\R^d} \rightarrow \C$
is a standard nonconvolution type Calder\'on-Zygmund kernel, which is a binary function satisfying
    \begin{itemize}
	\item \textbf{Size condition:} there exists $C_1>0$ such that
	   \begin{align} \label{Size condition}
       |K(x,y)| \leq \frac{C_1}{|x-y|^d} ;
       \end{align}
	\item  \textbf{Lipschitz regularity condition:} there exists  $\gamma\in [0,1]$ and $C_2>0$ such that
	   \begin{align} \label{Lipschitz regularity condition}
	   |K(x,y)-K(x,z)|+|K(y,x)-K(z,x)| \leq \frac{C_2 |y-z|^{\gamma}}{|x-y|^{d+\gamma}}, \quad\text{if}\  |x-y|\geq 2|y-z|,
	   \end{align}
    \end{itemize}
where $x$, $y$, $z\in\R^d$.
The Lipschitz regularity condition (\ref{Lipschitz regularity condition}) can be relaxed to certain variants of smoothness conditions (see \cite{Grafakos2019Alimited, Suzuki2021TheCalderon, Watson1990Weighted}).
In the following, all Calder\'on-Zygmund kernels under consideration are of nonconvolution type.

Inspired by operator algebras, harmonic analysis, noncommutative geometry, and quantum probability (see \cite{Cadilhac2018Weak, Chen2013Harmonic, Junge2014Smooth, Junge2018Noncommutative, Mei2007Operator, Mei2009Pseudo-localization, Mei2017Free, Mei2022Mikhlin, Parcet2009Pseudo-localization}), noncommutative harmonic analysis has become an exciting field in recent years.
However, generalizing the Calder\'on-Zygmund decomposition, a key technique in harmonic analysis first introduced in \cite{Calderon1952On}, to the noncommutative setting is challenging.

In 2009, Parcet \cite{Parcet2009Pseudo-localization} rigorously constructed a kind of noncommutative Calder\'on-Zygmund decomposition by using Cuculescu's maximal weak type $(1,1)$ estimates for noncommutative martingales \cite{Cuculescu1971Martingales}.
With this tool, Parcet established the weak type $(1,1)$ estimates for operator-valued Calder\'on-Zygmund singular integrals with the kernel satisfying the Lipschitz regularity condition (\ref{Lipschitz regularity condition})
(Cadilhac later gave a shorter proof in \cite{Cadilhac2018Weak}).
Notably, the H\"ormander condition
    \begin{align}\label{Hormander condition}
	\sup_{y\in\R^d,|v|>0} \int_{|x-y|\geq 2|v|} |K(x,y+v)-K(x,y)| \dif x < \infty,
    \end{align}
which is weaker than (\ref{Lipschitz regularity condition}), already guarantees the weak type $(1,1)$ boundedness in classical Calder\'on-Zygmund theory.
It is therefore natural to ask whether the same still holds in the noncommutative setting?
This problem remains open.


A step forward was made in 2022 by Cadilhac et al. \cite{Cadilhac2022Spectral}.
They developed a more effective noncommutative Calder\'on-Zygmund decomposition without the off-diagonal term of the good function, which allowed them to prove the noncommutative weak type $(1,1)$ estimates for Calder\'on-Zygmund operators if the kernel $K$ satisfies
    \begin{align} \label{L2-integral regularity condition}
	\sum_{m\geq 1} \sup_{{\begin{subarray}{c} y\in Q \\ Q\ \text{dyadic} \end{subarray}}} \Big( 2^{md}\ell(Q)^d  \int_{2^m\ell(Q)\leq|x-c(Q)|\leq2^{m+1}\ell(Q)} |K(x,y)-K(x,c(Q))|^2 \dif x \Big)^{\frac{1}{2}}
	<\infty,
    \end{align}
where $\ell(Q)$ and $c(Q)$ stand for the length and the center of the dyadic cube $Q$, respectively.

More generally, for $1\leq q\leq\infty$, we say that a kernel $K$ satisfies the $L_q$-Dini condition if
    \begin{align} \label{sum delta < infty}
	\sum_{m\geq 1} \delta_q(m) <\infty,
    \end{align}
where
     \begin{align*}
	\delta_q(m):=
	 \sup_{{\begin{subarray}{c} y\in \R^d,\ R>0 \\ |v|<R \end{subarray}}} & \Big( (2^m R)^{d(q-1)} \int_{2^mR\leq |x-y|\leq 2^{m+1}R} |K(x,y+v)-K(x,y)|^q \dif x\Big)^{\frac{1}{q}},
    \end{align*}
for $m\geq 1$.
The fact that $\delta_1(m)\lesssim_d \delta_2(m)$ shows that the $L_2$-Dini condition ((\ref{sum delta < infty}) with $q=2$) is stronger than the $L_1$-Dini condition ((\ref{sum delta < infty}) with $q=1$), but weaker than the Lipschitz regularity condition (\ref{Lipschitz regularity condition}).
Note that the $L_1$-Dini condition ((\ref{sum delta < infty}) with $q=1$) corresponds to the H\"ormander condition (\ref{Hormander condition}).

Using the noncommutative Calder\'on-Zygmund decomposition in \cite{Cadilhac2022Spectral}, Hong, the first author, and Xu \cite{hong2023maximal} established the weak type (1,1) boundedness for the noncommutative maximal truncated Calder\'on-Zygmund operators under the $L_2$-Dini condition ((\ref{sum delta < infty}) with $q=2$).
Further results on weak (1,1) boundedness theory for the noncommutative Calder\'on-Zygmund operators can be found in \cite{Cadilhac2022Spectral, Lai2024Noncommutative, Mei2009Pseudo-localization}.

Since it remains open whether the $L_1$-Dini condition ((\ref{sum delta < infty}) with $q=1$) guarantees the weak type $(1,1)$ boundedness, a natural intermediate problem is whether we can lower the exponent from $q=2$ to some $q$ satisfying $1\leq q<2$?
This also appears to be quite challenging.
In this paper, we focus on a class of $L_1$-integral conditions, which is close to the $L_1$-Dini condition ((\ref{sum delta < infty}) with $q=1$).
For a dyadic cube $Q$, we define the kernel average by
    \[
    \mathcal{K}_Q(x,y):= \frac{1}{|Q|}\int_{|v|\leq \frac{\sqrt{d}}{2}\ell(Q)} |K(x,y+v)-K(x,y)| \dif v,
    \]
and set
    \[
    \mathcal{H}(m) := \sup_{Q\ \text{dyadic}} \int_{2^m\sqrt{d} \ell(Q)\leq |x-c(Q)| \leq 2^{m+1} \sqrt{d}\ell(Q)} \sup_{y\in Q} \mathcal{K}_Q(x,y) \dif x.
    \]
We then say that the kernel satisfies the $L_1$-integral condition if
    \begin{align}\label{L1 integral condition}
    \sum_{m\geq 1} \mathcal{H}(m) <\infty.
    \end{align}
Compared with the $L_1$-Dini condition ((\ref{sum delta < infty}) with $q=1$), the $L_1$-integral condition (\ref{L1 integral condition}) replaces the supremum over $v$ by an average over the ball $\{v: |v|\leq \frac{\sqrt{d}}{2}\ell(Q) \}$, while the supremum over $y$ is taken inside the integral of $x$.
The purpose of this paper is to show that this $L_1$-integral condition (\ref{L1 integral condition}) is already sufficient for the weak type (1,1) boundedness of the noncommutative Calder\'on-Zygmund operators.

Before stating the main result, we introduce some notions.
Let $\mM$ be a von Neumann algebra equipped with a normal semifinite faithful ($n. s. f. $, for short) trace $\tau$.
Let $\mN = L_{\infty}(\R^d)\bar\otimes \mM$ be the tensor von Neumann algebra with the tensor trace  $\phi = \int\otimes \tau$.
Denote the noncommutative $L_p$ space associated with $(\mN, \phi)$ by $L_p(\mN)$ for $1\leq p\leq\infty$ (see Appendix \ref{Appendix: Noncommutative Lp spaces and maximal norms} for details).
The noncommutative Calder\'on-Zygmund operator $T$ with a kernel $K$ is defined as
    \begin{align}\label{def of Tf}
	T f(x) = \int_{\R^d} K(x,y) f(y) \dif y, \quad x\notin \overrightarrow{\supp}\ f,
    \end{align}
where $f\in L_1(\mN)\bigcap L_{\infty}(\mN)$ is compactly supported and measurable.
Here, $\overrightarrow{\text{supp}}\ f$ denotes the support of $f$ as an operator-valued function in $\R^d$ rather than the support projection as an element in a von Neumann algebra.
For any $\epsilon>0$, the associated truncated singular integral $T_{\epsilon}f$ is defined as
    \begin{align}\label{def of T epsilon f}
	T_{\epsilon}f(x) = \int_{|x-y|>\epsilon} K(x,y) f(y) \dif y.
    \end{align}

The main result is stated as follows.
    \begin{theorem}\label{Main theorem}
	Suppose $T$ is a Calder\'on-Zygmund operator defined as in (\ref{def of Tf}) associated with a kernel satisfying (\ref{Size condition}) and the $L_1$-integral condition (\ref{L1 integral condition}).
	Let $T_{\epsilon}$ be defined as in (\ref{def of T epsilon f}).
	Suppose $(T_{\epsilon})_{\epsilon>0}$ is of strong type $(p_0,p_0)$ for some $1< p_0 <\infty$, that is, for any $f\in L_{p_0}(\mN)$,
	\[
	\norm{(T_{\epsilon}f)_{\epsilon>0}}_{L_{p_0}(\mN;\ell_{\infty})} \lesssim \norm{f}_{L_{p_0}(\mN)}.
	\]
	Then, $(T_{\epsilon})_{\epsilon>0}$ is of weak type $(1,1)$, that is, for any $f\in L_1(\mN)$,
	\[
	\norm{(T_{\epsilon}f)_{\epsilon>0}}_{\Lambda_{1,\infty}(\mN;\ell_{\infty})} \lesssim \norm{f}_{L_1(\mN)}.
	\]
We refer to see Appendix \ref{Appendix: Noncommutative Lp spaces and maximal norms} for the definitions of the noncommutative maximal norm $\norm{\cdot}_{L_p(\mN; \ell_{\infty})}$ and the noncommutative weak maximal norm $\norm{\cdot}_{\Lambda_{p,\infty}(\mN;\ell_{\infty})}$.
    \end{theorem}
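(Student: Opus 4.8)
The plan is to follow the classical Calder\'on-Zygmund scheme adapted to the noncommutative setting, in the form refined in \cite{Cadilhac2022Spectral, hong2023maximal}, but with a new splitting of the bad part that is tailored to the $L_2$-mean (rather than pointwise-in-$v$ $L_2$-integral) condition. Fix $f\in L_1(\mN)$, $f\geq 0$, and a level $\lambda>0$; by standard reductions it suffices to handle $f=f^*$. Apply Cuculescu's construction at height $\lambda$ to the (dyadic, operator-valued) martingale $(\mathsf{E}_k f)_k$, producing decreasing projections $q_k\in\mN$ with $q=\bigwedge_k q_k$, the ``diagonal'' projections $p_k=q_{k-1}-q_k$, and $\phi(\mathbf 1-q)\lesssim\lambda^{-1}\|f\|_1$. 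Write $f=g+b$ in the usual way with $g$ the good function ($\|g\|_2^2\lesssim\lambda\|f\|_1$, $\|g\|_\infty\lesssim\lambda$) and $b=\sum_k b_k$, $b_k=p_k f_k - (f_k) p_k$-type pieces supported on the selected cubes; here I use the off-diagonal-free good function of \cite{Cadilhac2022Spectral}. The good part is controlled by the assumed strong $(p_0,p_0)$ maximal bound for $(T_\epsilon)$ via interpolation/Chebyshev exactly as in \cite{hong2023maximal}: $\phi\{\,|(T_\epsilon g)_\epsilon|>\lambda\,\}\lesssim\lambda^{-p_0}\|g\|_{p_0}^{p_0}\lesssim\lambda^{-1}\|f\|_1$ after using $L_2$–$L_\infty$ bounds on $g$.

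The heart of the argument is the bad part. Let $\zeta=\bigvee_k \chi_{\widehat{Q}_k^{(j)}}$ be the union of (suitably dilated) selected cubes; by the weak $(1,1)$ maximal inequality, $\phi(\mathbf 1-\zeta)\lesssim\lambda^{-1}\|f\|_1$, so it suffices to estimate $\zeta(T_\epsilon b)\zeta$ uniformly. For each selected cube $Q$ with center $c_Q$, expand $K(x,y)$ around $y=c_Q$; because $b_k$ has mean zero (absorbed by $q_k$), one is left with differences $K(x,y)-K(x,c_Q)$ integrated against $b_k(y)$. The new idea, matching the $L_2$-mean condition, is: instead of pulling the sup over $v=y-c_Q$ outside the integral (which needs $\delta_2$), split $b_k$ itself into a part whose support, after averaging over translations of $Q$, lets one invoke $H_2(m)$. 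Concretely, I would further decompose $b_k = \sum_\ell b_{k,\ell}$ according to annular position, and then run a Cotlar-type / duality estimate: for the annulus $2^m\ell(Q)\le |x-c_Q|\le 2^{m+1}\ell(Q)$, Cauchy–Schwarz in $x$ (using that the operator norm of an integral against a kernel is bounded by an $L_2$-average of the kernel times the $L_1$-mass of $b_k$, i.e.\ $\|\int (K(x,y)-K(x,c_Q)) b_k(y)\,dy\|_{L_2_x}\le \big(\int\!\!\int|K(x,y)-K(x,c_Q)|^2\,dy\,dx\big)^{1/2}\|b_k\|_1$ on that annulus, after normalizing by $|Q|$) produces exactly the quantity defining $H_2(m)$, so summing over $m$ and over the disjoint cubes $Q$ (using almost-orthogonality of the $p_k$ and Cuculescu disjointness) gives $\lambda\phi(\zeta(T_\epsilon b)\zeta)\lesssim \big(\sum_m H_2(m)\big)\|f\|_1$. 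The truncation parameter $\epsilon$ is handled uniformly by the standard observation that $T_\epsilon$ restricted to the relevant region is itself a CZ operator with the same bounds, or by comparing $T_\epsilon b$ to $Tb$ plus a harmless local term absorbed into the good estimate; this is exactly where the argument of \cite{hong2023maximal} for the maximal operator is reused, and the uniformity in $\epsilon$ comes for free because all kernel estimates above are $\epsilon$-independent.

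I expect the main obstacle to be the step replacing the pointwise-in-$v$ kernel smoothness by its $v$-average: one must arrange the decomposition of $b$ (and the choice of dilated cubes $\widehat Q$) so that the translations $y\mapsto y+v$, $|v|\lesssim\ell(Q)$, genuinely sweep out the average defining $H_2(m)$ without losing the cancellation of $b_k$ or the disjointness needed to sum over cubes. This is precisely the ``further splitting of the bad function'' advertised in the abstract; the technical crux is to show that, after this extra splitting, the pieces still satisfy the two properties one needs — $L_1$ mass controlled by $\lambda\cdot|Q|$ and mean-zero against $q_k$ — while the row/column sums over $k$ remain bounded by $\|f\|_1$ (this uses Cuculescu's estimates together with the noncommutative Bekoll\'e–type bound $\sum_k\phi(p_k f_k)\lesssim\|f\|_1$). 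Once that bookkeeping is in place, the remaining estimates are the standard noncommutative row/column $L_{1,\infty}$ manipulations (Kadison–Schwarz, $\phi(\mathbf1-q)$ bounds, and the trick of passing from $\Lambda_{1,\infty}$ to an $L_1+L_2$ splitting of the error operators) that already appear in \cite{Parcet2009Pseudo-localization, Cadilhac2022Spectral, hong2023maximal}, so I would invoke them rather than reprove them.
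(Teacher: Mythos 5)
Your high-level roadmap matches the paper's: Cuculescu at height $\lambda$, an off-diagonal-free good function handled by the strong $(p_0,p_0)$ hypothesis, the expansion of the projection $\zeta$ so that only $\zeta (T_\epsilon b)\zeta$ needs an $L_1$ estimate, and the reduction of the truncations to the lacunary family $S_i$ as in \cite{hong2023maximal}. Where you go wrong is precisely the step you correctly flag as the crux: how to turn the pointwise-in-$v$ kernel smoothness into the $v$-average appearing in $H_2(m)$. Your proposed mechanism — decompose $b_k$ "according to annular position" and run Cauchy--Schwarz — does not produce that average, and the key inequality you display,
\begin{equation*}
\Bigl\|\int (K(x,y)-K(x,c_Q))\,b_k(y)\,dy\Bigr\|_{L^2_x}
\;\leq\;\Bigl(\int\!\!\int |K(x,y)-K(x,c_Q)|^2\,dy\,dx\Bigr)^{1/2}\|b_k\|_{1},
\end{equation*}
is false even for scalar nonnegative $b_k$. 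Taking $b_k$ concentrated near a single $y_0\in Q$ shows it would force the pointwise value $\|K(\cdot,y_0)-K(\cdot,c_Q)\|_{L^2_x}$ to be controlled by its $L^2$-average over $y\in Q$, i.e.\ a sup bounded by an average, which is exactly the reverse of what is available. Minkowski's integral inequality gives a sup over $y$ (that is $\delta_2$, not $H_2$), and Cauchy--Schwarz in $y$ instead brings in $\|b_k\|_{L^2(Q)}$, which is not controlled by the stopping condition. So the annular split plus duality cannot, by itself, reach the $L_2$-mean H\"ormander hypothesis.

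The device the paper actually uses, and which you are missing, is a \emph{mollification} of the bad function. For each selected cube $Q$ of generation $n$, set $\widetilde b_{\mathrm{d},n}^Q = b_{\mathrm{d},n}^Q * h_{r_n}$ with $h_{r_n}$ a bump of scale $r_n\approx\ell(Q)$, and write $b = \widetilde b + b^0$ accordingly, for both the diagonal and off-diagonal pieces (Theorem~\ref{nc CZ decomposition with convolution}). The mollified piece $\widetilde b_{\mathrm{d},n}^Q$ retains the cancellation $\int \widetilde b_{\mathrm{d},n}^Q=0$, and after subtracting $K_j(x,c(Q))$ the $y$-integration passes through the convolution to produce an honest average of $K_j(\cdot,y)-K_j(\cdot,c(Q))$ over a ball around $c(Q)$, exactly the quantity in $H_q$. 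The remainder $b^0_{\mathrm{d},n}=b_{\mathrm{d},n}-\widetilde b_{\mathrm{d},n}$, when hit by $\mathcal T_j$, can be rewritten (using $\int h =1$) as $\int h_{r_n}(z)\int (K_j(x,y)-K_j(x,y+z))\,b_{\mathrm{d},n}^Q(y)\,dy\,dz$, which yields the $z$-average directly. Lemma~\ref{the first lemma for bad function} and Lemma~\ref{the second lemma for bad function} then package these averages so that the diagonal part is closed using $H_1\lesssim H_2$ and the off-diagonal part via the column-space H\"older inequality (Lemma~\ref{a useful lemma for bad function}) together with $q_Q f_Q q_Q\leq\lambda q_Q$. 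Without this convolution split, the argument only recovers the $L_2$-integral regularity condition $\delta_2$, not the weaker $H_2$ the theorem claims to use, so the gap is substantive and not just a matter of bookkeeping.

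Two smaller remarks. The support and cancellation properties you worry about do survive the mollification, but only because $r_n$ is chosen as the circumradius of $Q$ and $s$ is fixed large enough (Remark~\ref{the value of s}) so that $\zeta$ still annihilates the translated supports (Lemma~\ref{properties of nc CZ decomposition with convolution}). And the uniformity in $\epsilon$ is not quite "for free"; the paper proves Lemma~\ref{reduction to lacunary sequence} separately to absorb the boundary truncation term $T_\epsilon^{i_\epsilon}$, and then reduces everything to the lacunary operators $S_i$ in Theorem~\ref{Second theorem}.
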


Our main strategy in this paper is to develop a refined noncommutative Calder\'on-Zygmund decomposition, along with related techniques, adapted to kernels satisfying the $L_1$-integral condition (\ref{L1 integral condition}).
Existing methods are inadequate for this purpose.
Parcet's decomposition \cite{Parcet2009Pseudo-localization} requires the stronger Lipschitz regularity condition (\ref{Lipschitz regularity condition}) and may not be directly applied to the noncommutative maximal operators.
Meanwhile, the decomposition by Cadilhac et al. \cite{Cadilhac2022Spectral} can only deal with the
$L_2$-Dini condition ((\ref{sum delta < infty}) with $q=2$) and therefore does not apply to our setting.
To address this gap, we introduce a slightly new noncommutative Calder\'on-Zygmund decomposition achieved by further splitting of the bad function into a convolution part and a remainder part.
In addition, we avoid using the H\"older inequality for the noncommutative square functions but instead employ more explicit factorizations for such terms.

This paper is organized as follows.
We begin by presenting our key tool: the refined noncommutative Calder\'on-Zygmund decomposition (Theorem \ref{nc CZ decomposition with convolution}) in Section \ref{Section: Noncommutative C-Z decomposition}.
Next, Section \ref{Section: Two reductions} starts the proof of the main result (Theorem \ref{Main theorem}), reducing it to proving the weak type (1,1) estimates for the lacunary sequences with real kernels (Theorem \ref{Second theorem}).
With this reduction established, Section \ref{Proof of the main theorem} is devoted to proving Theorem \ref{Second theorem}.
Finally, some related background concepts are included in Appendix \ref{Appendix: Noncommutative Lp spaces and maximal norms}.

Now, we conclude this section by listing the notation required for our later analysis.\\

\textbf{Notation}
\begin{itemize}
	\item $\mQ_n$: the set of all dyadic cubes of side length $2^{-n}$, $n\in\Z$.
	\item $Q_{x,n}$: the unique cube in $\mQ_n$ containing $x\in\R^d$.
	\item $\mQ$: the set of all standard dyadic cubes in $\R^d$, that is, $\mQ = \bigcup\limits_n\mQ_n$.
	\item $c(Q)$: the center of the cube $Q$.
	\item $lQ$: the concentric cube sharing the center of  $Q$ such that its length is $l$ times the length of $Q$.
	\item $B_r(x)$: the ball with $x\in\R^d$ as its center and $r>0$ as its radius, and the center $x$ will be omitted when it is the origin.
	\item $|E|$: the volume of set $E\subset\R^d$.
	\item $[k]$: the integer part of $k$.

\end{itemize}

\section{Noncommutative Calder\'on-Zygmund decomposition}
\label{Section: Noncommutative C-Z decomposition}

The Calder\'on-Zygmund decomposition is a fundamental tool for studying the boundedness of singular integral operators.
When transferring to the von Neumann algebraic setting, its noncommutative counterpart is primarily based on Cuculescu's martingale projections \cite{Cuculescu1971Martingales}.
 In this section, we first review Cuculescu's construction, and then use it to develop our refined version of the noncommutative Calder\'on-Zygmund decomposition.
Background on noncommutative $L_p$ spaces and maximal norms can be found in Appendix \ref{Appendix: Noncommutative Lp spaces and maximal norms}.

\subsection{Cuculescu's construction}

Let $\mN = L_{\infty}(\R^d) \bar{\otimes} \mM$ be the von Neumann algebra tensor product, equipped with the tensor trace $\phi = \int_{\R^d} \otimes \tau$.
For $n \in \mathbb{Z}$, let $\sigma_n$ be the $\sigma$-algebra generated by $\mQ_n$, and let $\mN_n = L_{\infty}(\R^d, \sigma_n, dx) \bar{\otimes} \mM$, which is a von Neumann subalgebra of $\mN$.
Then, $(\mN_n)_{n \in \mathbb{Z}}$ forms a sequence of increasing von Neumann subalgebras of $\mN$ whose union is weak-$*$ dense in $\mN$.

Let $E_n$ be a conditional expectation from $\mN$ onto $\mN_n$.
A sequence $(a_n)_{n\in\Z} \subset \mN$ is said to be a martingale if $E_{n}(a_{n+1}) = a_n$.
For $1\leq q \leq\infty$, if in addition $(a_n)_{n\in\Z} \subset L_q(\mN)$, then $(a_n)_{n\in\Z}$ is called an $L_q$-martingale.
In this paper, we define the conditional expectation $E_n$ by
    \begin{align*}
	E_n (f) = \sum_{Q\in\mQ_n} f_Q \chi_Q,\quad \forall f\in L_1(\mN),
    \end{align*}
where $\chi_Q$ is the characteristic function of $Q$, and $f_Q = \frac{1}{|Q|} \int_{Q} f(y) \dif y $.
Thus, $(\mN_n)_{n\in\Z}$ becomes a filtration with the associated conditional expectations  $(E_n)_{n\in\Z}$.
Define $f_n= E_n(f)$.
Then $(f_n)_{n\in\Z}$ is clearly an $L_1$-martingale.

Set
    \begin{align*}
	\mN_{c,+} = \set{f: \R^d \rightarrow \mM \cap L_1(\mM): f\geq 0,\ \overrightarrow{\text{supp}}\ f \text{ is compact}},
    \end{align*}
where $\overrightarrow{\text{supp}}\ f$ means the support of $f$ as an operator-valued function in $\R^d$.
Based on the density of $\mN_{c,+}$ in $L_1(\mN)_+$, we only need to consider the functions in $\mN_{c,+}$.

The following lemma, established in \cite{Cuculescu1971Martingales} (see also \cite{Parcet2009Pseudo-localization}), plays an important role in the noncommutative Calder\'on-Zygmund decomposition.
    \begin{lemma}$(\rm{Cuculescu})$ \label{cuculescu's construction}
	Let $f\in\mN_{c,+}$.
	For any $\lambda>0$, there exists a decreasing sequence $(q_n(f,\lambda))_{n\in\Z}$  (for convenience, we write $q_n(f,\lambda)$ as $q_{n}$) of projections in $\mN$ satisfying
	    \begin{enumerate}[(i)]
	    \item $q_n \in \mN_n$;
		\item $q_{n}$ commutes with $q_{n-1} f_n q_{n-1}$;
		\item $q_n f_n q_n \leq \lambda q_n$;
		\item The following estimate holds
	              \begin{align*}
	              \phi\bigg(\Bone_{\mN} - \bigwedge_{n\in\Z} q_{n} \bigg)
	              \leq \frac{1}{\lambda} \norm{f}_{L_1(\mN)}.
	              \end{align*}
	    \end{enumerate}
    \end{lemma}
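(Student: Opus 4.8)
The plan is to build the projections $q_n$ by a downward induction on $n$, starting in the range where the martingale $f_n = E_n(f)$ is automatically controlled by $\lambda$. Since $f\in\mathcal N_{c,+}$ is bounded and supported in some cube $Q_0$, for any $Q\in\mathcal Q_n$ one has $0\le f_Q = |Q|^{-1}\int_{Q\cap Q_0} f \le |Q_0|\,\|f\|_\infty\,|Q|^{-1}\mathbf 1$, which tends to $0$ as the side length grows; hence there is $n_0$ with $f_n\le\lambda\mathbf 1$ for all $n\le n_0$, and I would set $q_n=\mathbf 1_{\mathcal N}$ for those $n$. For the inductive step, given a projection $q_{n-1}\in\mathcal N_{n-1}$, put $b_n := q_{n-1}f_nq_{n-1}\in\mathcal N_n$, a positive operator whose support projection lies under $q_{n-1}$ (because $q_{n-1}b_n=b_n$), and define
\[
q_n \;:=\; q_{n-1} - \mathbf 1_{(\lambda,\infty)}(b_n) \;=\; q_{n-1}\,\mathbf 1_{[0,\lambda]}(b_n).
\]
Since $\mathbf 1_{(\lambda,\infty)}(b_n)\le q_{n-1}$, this is a projection with $q_n\le q_{n-1}$; it lies in $\mathcal N_n$, being built from spectral projections of $b_n\in\mathcal N_n$, and it commutes with $b_n$ by construction — that is (i) and (ii). For (iii), using $q_nq_{n-1}=q_n$ and $[q_n,b_n]=0$ one computes $q_nf_nq_n = q_nb_nq_n = b_n q_n = \big(b_n\mathbf 1_{[0,\lambda]}(b_n)\big)q_{n-1}\le \lambda q_n$, where the last inequality uses that $q_{n-1}$ commutes with both $b_n$ and $\mathbf 1_{[0,\lambda]}(b_n)$.

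For the quantitative estimate (iv) I would telescope. Set $q := \bigwedge_{n} q_n$, the strong limit of the decreasing sequence, and $p_n := q_{n-1}-q_n = \mathbf 1_{(\lambda,\infty)}(b_n)$. Because $q_n=\mathbf 1$ for $n\le n_0$, the $p_n$ are mutually orthogonal projections with $\sum_n p_n = \mathbf 1 - q$, so normality of $\phi$ gives $\phi(\mathbf 1-q)=\sum_n\phi(p_n)$. On $\operatorname{ran}p_n$ the operator $b_n$ exceeds $\lambda$, whence $\lambda p_n\le p_nb_np_n = p_nf_np_n$ (using $p_nq_{n-1}=p_n$); applying $\phi$, traciality together with $p_n\in\mathcal N_n$ and the $\mathcal N_n$-bimodule property of $E_n$ yield $\lambda\phi(p_n)\le\phi(p_nf_np_n)=\phi(p_nf_n)=\phi(p_nf)$. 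Summing over $n$ and using $0\le\mathbf 1-q\le\mathbf 1$ with $f\ge0$,
\[
\lambda\,\phi(\mathbf 1-q)\le\sum_n\phi(p_nf)=\phi\big(f^{1/2}(\mathbf 1-q)f^{1/2}\big)\le\phi(f)=\norm{f}_{L_1(\mathcal N)},
\]
which is exactly (iv).

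I expect the only genuinely delicate point to be the initialization: it is precisely there that the hypotheses packaged into $\mathcal N_{c,+}$ (boundedness together with compact support) are used, to guarantee $f_n\le\lambda\mathbf 1$ and $q_n=\mathbf 1$ for all sufficiently negative $n$; without this the telescoping identity $\mathbf 1-q=\sum_n p_n$ and the ensuing normality argument are unavailable. Everything else is routine functional calculus, the one thing to keep track of being that $q_n$ is genuinely a projection commuting with $b_n$ — which is what simultaneously makes (iii) and the estimate $\lambda p_n\le p_nf_np_n$ go through.
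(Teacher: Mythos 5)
Your proof is correct and follows the standard Cuculescu argument that the paper defers to its references \cite{Cuculescu1971Martingales, Parcet2009Pseudo-localization}: the initialization using boundedness and compact support of $f$, the downward-inductive spectral construction, the verification of (i)--(iii) by functional calculus, and the telescoping trace estimate for (iv) via $\lambda p_n\le p_nf_np_n$, traciality, the bimodule property of $E_n$, and normality are all sound. One small point worth flagging: the paper records the construction as $q_n=\chi_{(0,\lambda]}(q_{n-1}f_nq_{n-1})$, whereas you take $q_n=q_{n-1}-\chi_{(\lambda,\infty)}(q_{n-1}f_nq_{n-1})$; these coincide only when $q_{n-1}f_nq_{n-1}$ has trivial kernel inside $\operatorname{ran}(q_{n-1})$, and for an $f$ of compact support the paper's literal formula would make $p_n$ also absorb the part of $q_{n-1}$ on which $q_{n-1}f_nq_{n-1}$ vanishes, breaking $\lambda p_n\le p_nf_np_n$ there. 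Your version is the one on which (iv) closes exactly as you argue.
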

We see that $f_n$ can be bounded by any $\lambda>0$ if $n$ is small enough.
Hence, for a given $\lambda>0$, there exists an integer $n_{\lambda}(f)$ such that $f_n \leq \lambda \Bone_{\mN}$ when $n\leq n_{\lambda}(f)$.
The decreasing sequence of projections $(q_n)_{n\in\Z}$ in Lemma \ref{cuculescu's construction} is in fact given by
    \begin{align*}
	q_n=
    \begin{cases}
    \Bone_{\mN}, & \text{if}\ n\leq n_{\lambda}(f),\\
    \chi_{(0,\lambda](q_{n-1} f_n q_{n-1})}, & \text{if}\ n > n_{\lambda}(f).
    \end{cases}
    \end{align*}
The following alternative representation of $q_n$ is more useful:
    \begin{align*}
	q_n = \sum_{Q\in\mQ_n} q_Q \chi_Q,
    \end{align*}
where
    \begin{align*}
	q_Q=
    \begin{cases}
    \Bone_{\mN}, & \text{if}\ n\leq n_{\lambda}(f),\\
    \chi_{(0,\lambda](q_{\widehat Q} f_Q q_{\widehat Q})}, & \text{if}\ n > n_{\lambda}(f).
    \end{cases}
    \end{align*}
Here, $\widehat Q$ denotes the dyadic father of $Q$.
One can easily verify that
    \begin{align}\label{q_Q f_Q q_Q < lambda q_Q}
	q_Q \leq q_{\widehat Q}, \quad q_Q \text{ commutes with } q_{\widehat Q} f_Q q_{\widehat Q}, \quad q_Q f_Q q_Q \leq \lambda q_Q.
    \end{align}
Furthermore, let $(p_n)_{n\in\Z}$ be a sequence of projections defined by
    \begin{align*}
	p_n = q_{n-1} - q_n = \sum_{Q\in\mQ_n} (q_{\widehat Q}-q_Q) \chi_Q =: \sum_{Q\in\mQ_n}p_Q\chi_Q.
    \end{align*}
This gives
    \begin{align} \label{sum pn=1-q}
	\sum_{n\in\Z} p_n = \Bone_{\mN} - q = q^{\perp} \quad\text{with}\quad q= \bigwedge_{n\in\Z}q_n.
    \end{align}

The next result follows directly from Lemma \ref{cuculescu's construction}; we omit the proofs here.

    \begin{lemma} \label{properties of pn, qn}
    Let $(q_n)_{n\in\Z}$ and $(p_n)_{n\in\Z}$ be the sequences of projections in Lemma \ref{cuculescu's construction}.
    Then, the following equalities hold:
	\begin{enumerate}[(i)]
		\item $p_n f_n q = q f_n p_n = 0$, for $n\in\Z$.
		\item $p_n f_{n\wedge k} p_k = p_k f_{n\wedge k} p_n = 0$, for $n$, $k\in\Z$, $n\neq k$, where $n\wedge k := \min\set{n,k}$.
	\end{enumerate}
    \end{lemma}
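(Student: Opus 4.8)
The plan is to reduce the whole lemma to the single relation
\[
q_n f_n p_n = p_n f_n q_n = 0 \qquad (n\in\Z),
\]
which is the ``off-diagonal vanishing'' already built into Cuculescu's construction, and then to propagate it using only the fact that $(q_n)_{n\in\Z}$ is decreasing (so that $q=\bigwedge_m q_m\le q_n$). I do not expect any substantive obstacle here: the argument is entirely formal, and the only care needed is to keep track, in a noncommutative product, of the side on which an auxiliary projection is inserted.

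First I would establish the displayed relation. Write $A_n:=q_{n-1}f_nq_{n-1}$. By Lemma \ref{cuculescu's construction}(ii), $q_n$ commutes with $A_n$. Since $q_n\le q_{n-1}$, one has $q_nq_{n-1}=q_{n-1}q_n=q_n$, hence $q_np_n=q_n(q_{n-1}-q_n)=0=p_nq_n$, while $q_{n-1}p_n=p_nq_{n-1}=p_n$ because $p_n\le q_{n-1}$. Therefore
\[
q_nf_np_n = q_nq_{n-1}f_nq_{n-1}p_n = q_nA_np_n = A_nq_np_n = 0 ,
\]
the last equality using the commutation of $q_n$ and $A_n$. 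Since $f\ge 0$, the element $f_n=E_n(f)$ is self-adjoint, so passing to adjoints gives $p_nf_nq_n=(q_nf_np_n)^*=0$ as well. Note that only property (ii) of Lemma \ref{cuculescu's construction} enters, not the spectral description of $q_n$, and that everything takes place in $L_1(\mN)$, so no boundedness of $f_n$ is needed.

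Next I would deduce (i): since $q\le q_n$ we have $qq_n=q_nq=q$, whence $p_nf_nq=(p_nf_nq_n)q=0$ and $qf_np_n=q(q_nf_np_n)=0$. For (ii) it suffices to prove $p_af_{a\wedge b}p_b=0$ for every pair $a\ne b$, because the two identities in the statement are this with $(a,b)=(n,k)$ and with $(a,b)=(k,n)$ (and $n\wedge k=k\wedge n$). If $a<b$, then $a\wedge b=a$ and $b-1\ge a$, so $p_b\le q_{b-1}\le q_a$, giving $q_ap_b=p_b$ and hence $p_af_ap_b=p_af_aq_ap_b=(p_af_aq_a)p_b=0$. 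If $a>b$, then $a\wedge b=b$ and $a-1\ge b$, so $p_a\le q_{a-1}\le q_b$, giving $p_aq_b=p_a$ and hence $p_af_bp_b=p_aq_bf_bp_b=p_a(q_bf_bp_b)=0$, this time invoking the displayed relation at index $b$. The only point requiring mild attention is that $f_{n\wedge k}$ selects the \emph{smaller} index, which forces the two orderings $n<k$ and $n>k$ to be handled separately; beyond that, the proof is a routine manipulation of commuting projections.
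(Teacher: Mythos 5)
Your argument is correct and complete. The paper itself omits the proof, stating only that the lemma ``follows directly from Lemma \ref{cuculescu's construction}''; your derivation is precisely the expected one: reduce everything to $q_nf_np_n=p_nf_nq_n=0$, obtained from the commutation of $q_n$ with $q_{n-1}f_nq_{n-1}$ together with $q_np_n=0$, and then insert $q$ or the appropriate $q_a$, $q_b$ (using monotonicity of $(q_n)$) to absorb $p_n$ or $p_b$. Each step — $q_nq_{n-1}=q_n$, $q_{n-1}p_n=p_n$, $q\le q_n$, $p_b\le q_{b-1}\le q_a$ when $a<b$, the adjoint step using positivity of $f_n$ — is justified, and the case split in (ii) correctly tracks which of the two smaller-index relations from Step~1 is being invoked.
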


\subsection{Noncommutative Calder\'on-Zygmund decomposition}

    For fixed $f\in\mN_{c,+}$, $\lambda>0$, and $s\in\N$, we define the projection $\zeta_s$ as
    \begin{align}\label{projection zeta_s}
	\zeta_s = \bigg(\bigvee_{Q\in\mQ} p_Q \chi_{(2s+1)Q}\bigg)^{\perp}.
    \end{align}
    We require the following property of this projection, whose proof can be found in \cite{Cadilhac2022Spectral}.

    \begin{lemma} \label{boundedness of 1-zeta}
    For fixed $f\in\mN_{c,+}$, $\lambda>0$, and $s\in\N$, we have
	\begin{align*} 
	\phi(\Bone_{\mN}-\zeta_s) \leq \frac{(2s+1)^d}{\lambda}  \norm{f}_{L_1(\mN)}.
	\end{align*}
    \end{lemma}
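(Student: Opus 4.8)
\textbf{Proof proposal for Lemma \ref{boundedness of 1-zeta}.}

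The plan is to estimate $\phi(\Bone_\mN - \zeta_s)$ by controlling the trace of the supremum of the projections $p_Q\chi_{(2s+1)Q}$ that define $\zeta_s^\perp$ in \eqref{projection zeta_s}. Since $\zeta_s^\perp = \bigvee_{Q\in\mQ} p_Q\chi_{(2s+1)Q}$, the first step is the elementary bound $\phi(\bigvee_Q e_Q) \le \sum_Q \phi(e_Q)$ valid for any family of projections $e_Q$, so it suffices to show $\sum_{Q\in\mQ} \phi\big(p_Q\chi_{(2s+1)Q}\big) \le \frac{(2s+1)^d}{\lambda}\|f\|_{L_1(\mN)}$. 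Here I would use $\phi(p_Q\chi_{(2s+1)Q}) = |(2s+1)Q|\,\tau(p_Q) = (2s+1)^d |Q|\,\tau(p_Q)$, reducing the task to the key inequality $\sum_{Q\in\mQ} |Q|\,\tau(p_Q) \le \frac{1}{\lambda}\|f\|_{L_1(\mN)}$.

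To establish this inequality I would exploit the defining property of the Cuculescu projections. Grouping the dyadic cubes by generation, $\sum_{Q\in\mQ}|Q|\tau(p_Q) = \sum_{n\in\Z}\sum_{Q\in\mQ_n}|Q|\tau(p_Q)$. For a cube $Q\in\mQ_n$ with $n > n_\lambda(f)$, the projection $p_Q = q_{\widehat Q} - q_Q$ lives below $q_{\widehat Q}$ and, by construction, $q_Q = \chi_{(0,\lambda]}(q_{\widehat Q} f_Q q_{\widehat Q})$ is the spectral projection where $q_{\widehat Q}f_Q q_{\widehat Q}$ does not exceed $\lambda$; hence on the complementary piece $p_Q$ one has $p_Q f_Q p_Q = p_Q q_{\widehat Q} f_Q q_{\widehat Q} p_Q > \lambda p_Q$, which gives $\tau(p_Q) \le \frac{1}{\lambda}\tau(p_Q f_Q p_Q) \le \frac{1}{\lambda}\tau(f_Q p_Q)$. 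Multiplying by $|Q|$ and using $|Q| f_Q = \int_Q f(y)\,dy$ together with the fact that $p_Q\chi_Q$ is exactly the portion of $p_n$ supported on $Q$, summing over $Q\in\mQ_n$ and then over $n$ telescopes (the projections $p_n$ sum to $q^\perp \le \Bone_\mN$ by \eqref{sum pn=1-q}) and yields $\sum_{Q}|Q|\tau(p_Q) \le \frac{1}{\lambda}\phi\big(f\,q^\perp\big) \le \frac{1}{\lambda}\phi(f) = \frac{1}{\lambda}\|f\|_{L_1(\mN)}$, where the last step uses $f\ge 0$. (When $n\le n_\lambda(f)$ one has $q_n = \Bone_\mN$ so $p_Q = 0$ and those terms vanish.)

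The main technical point to handle carefully is the passage from $\tau(p_Q)\le \frac1\lambda\tau(f_Q p_Q)$ back to a single global trace: one must check that summing the local contributions $|Q|\tau(f_Q p_Q) = \tau\big((\int_Q f)\,p_Q\big)$ over all $Q$ reassembles into $\phi(f q^\perp)$ rather than overcounting, which is where the disjointness of cubes within a generation and the martingale identity $E_n(f) = f_n$ (so that $\tau(f_Q p_Q) = \tau(f_n p_Q)$ on $Q$, and $p_n = \sum_{Q\in\mQ_n}p_Q\chi_Q$) must be invoked; the identity $\sum_n \phi(f_n p_n) = \phi(f q^\perp)$ follows from Lemma \ref{properties of pn, qn} and \eqref{sum pn=1-q}. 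Everything else is the routine subadditivity of $\phi$ on joins and the scaling $\phi(p_Q\chi_{(2s+1)Q}) = (2s+1)^d|Q|\tau(p_Q)$. Since this lemma is quoted from \cite{Cadilhac2022Spectral}, I would in fact just cite it, but the sketch above is the argument.
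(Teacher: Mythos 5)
Your argument is correct and is the standard route: trace subadditivity on the join of projections, the scaling identity $\phi\bigl(p_Q\chi_{(2s+1)Q}\bigr) = (2s+1)^d|Q|\tau(p_Q)$, and resumming over generations. The paper itself defers to \cite{Cadilhac2022Spectral} for this lemma, so there is no in-paper proof to compare against, but your sketch is a faithful reconstruction of that argument.

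Two small points. First, you can streamline the ending: once you reach $\sum_{Q\in\mQ}|Q|\tau(p_Q)$, observe that
\begin{equation*}
\sum_{Q\in\mQ}|Q|\tau(p_Q) \;=\; \sum_{n\in\Z}\sum_{Q\in\mQ_n}\phi(p_Q\chi_Q) \;=\; \sum_{n\in\Z}\phi(p_n) \;=\; \phi(q^\perp)
\end{equation*}
by the mutual orthogonality of the $p_n$ and \eqref{sum pn=1-q}, and then $\phi(q^\perp)\le\frac{1}{\lambda}\|f\|_{L_1(\mN)}$ is exactly item $(iv)$ of Lemma \ref{cuculescu's construction}, which you may cite directly rather than re-derive through $\tau(p_Q)\le\frac{1}{\lambda}\tau(f_Qp_Q)$. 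Second, if you do keep the re-derivation, the step $p_Qf_Qp_Q>\lambda p_Q$ needs the usual reading of the Cuculescu recursion, namely $p_Q=\chi_{(\lambda,\infty)}\bigl(q_{\widehat Q}f_Qq_{\widehat Q}\bigr)$ (equivalently $q_Q = q_{\widehat Q}-\chi_{(\lambda,\infty)}(q_{\widehat Q}f_Qq_{\widehat Q})$), so that the kernel of $q_{\widehat Q}f_Qq_{\widehat Q}$ inside $q_{\widehat Q}$ is retained by $q_Q$; taking the paper's $q_Q=\chi_{(0,\lambda]}(\cdot)$ entirely literally would push that kernel into $p_Q$, where the strict inequality fails. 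Invoking Lemma \ref{cuculescu's construction} $(iv)$ sidesteps this caveat altogether.
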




Below, we construct a slightly new noncommutative Calder\'on-Zygmund decomposition inspired by \cite{Cadilhac2022Spectral}, and present its properties later.
    \begin{theorem} \label{nc CZ decomposition with convolution}
	Fix $f\in\mN_{c,+}$, $\lambda>0$, and $s\in\N$.
	Let $(q_n)_{n\in\Z}$ and $(p_n)_{n\in\Z}$ be the sequences of projections in Lemma \ref{cuculescu's construction}, where $q_n = \sum\limits_{Q\in\mQ_n} q_Q \chi_Q$, $p_n = \sum\limits_{Q\in\mQ_n} p_Q \chi_Q$.
Suppose $h\in C_{c}^{\infty} (\R^d)$ satisfies
    \begin{align*}
    \supp h \subset B_1,\quad \int_{\R^d} h =1, \quad h\geq 0.
    \end{align*}
Define $h_r(x)= \frac{1}{r^d} h(\frac{x}{r})$.
Then, $f$ admits the decomposition:
	\begin{align*}
	f = g+ b
	  = g + \widetilde{b}_{\text{d}} + b_{\text{d}}^0 + \widetilde{b}_{\text{off}} + b_{\text{off}}^0,
	\end{align*}
	where $g$ is positive and $b$ is self-adjoint in $\mN$.
	More precisely, these components are defined as follows:
	    \[g=qfq + \sum\limits_{n\in\Z}p_n f_n p_n,\]
        \begin{gather*}
        \widetilde{b}_{\text{d}} = \sum\limits_{n\in\Z} \widetilde{b}_{\text{d},n} = \sum\limits_{n\in\Z}\sum\limits_{Q\in\mQ_n}\widetilde{b}_{\text{d},n}^Q ,\quad
        b_{\text{d}}^0 = \sum\limits_{n\in\Z} b_{\text{d},n}^0=\sum\limits_{n\in\Z} \sum\limits_{Q\in\mQ_n}(b_{\text{d},n}^Q-\widetilde{b}_{\text{d},n}^Q),\\
        \widetilde{b}_{\text{off}} = \sum\limits_{n\in\Z} \widetilde{b}_{\text{off},n} = \sum\limits_{n\in\Z} \sum\limits_{Q\in\mQ_n} \widetilde{b}_{\text{off},n}^Q ,\quad
        b_{\text{off}}^0 = \sum\limits_{n\in\Z} b_{\text{off},n}^0 = \sum\limits_{n\in\Z}\sum\limits_{Q\in\mQ_n}(b_{\text{off},n}^Q-\widetilde{b}_{\text{off},n}^Q),
        \end{gather*}
where for each $Q\in\mQ_n$,
        \begin{gather*}
     	b_{\text{d},n}^Q = p_Q(f-f_n)p_Q\chi_Q,
     	\quad b_{\text{off},n}^Q = p_Q(f-f_n)q_Q\chi_Q + q_Q(f-f_n)p_Q\chi_Q, \\
     	\widetilde{b}_{\text{d},n}^Q = b_{\text{d},n}^Q * h_{\frac{\sqrt{d}}{2}\ell(Q)},
     	\quad \widetilde{b}_{\text{off},n}^Q = b_{\text{off},n}^Q * h_{\frac{\sqrt{d}}{2}\ell(Q)}.
        \end{gather*}
    \end{theorem}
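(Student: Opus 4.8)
The statement of Theorem \ref{nc CZ decomposition with convolution} is really a definition together with an implicit verification that the decomposition $f = g + \widetilde b_{\text{d}} + b_{\text{d}}^0 + \widetilde b_{\text{off}} + b_{\text{off}}^0$ is algebraically valid and that $g$ is positive while $b$ is self-adjoint. So the ``proof'' should establish exactly these two assertions, plus the bookkeeping showing the five listed pieces do sum to $f$.

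\begin{proof}[Proof sketch]
The plan is to first verify the algebraic identity $f = g + b$ with $b = \widetilde b_{\text{d}} + b_{\text{d}}^0 + \widetilde b_{\text{off}} + b_{\text{off}}^0$, and then check the positivity of $g$ and self-adjointness of $b$. For the identity, note that by construction $\widetilde b_{\text{d},n}^Q + (b_{\text{d},n}^Q - \widetilde b_{\text{d},n}^Q) = b_{\text{d},n}^Q$ and likewise for the off-diagonal pieces, so
\[
\widetilde b_{\text{d}} + b_{\text{d}}^0 + \widetilde b_{\text{off}} + b_{\text{off}}^0 = \sum_{n\in\Z}\sum_{Q\in\mQ_n}\bigl(b_{\text{d},n}^Q + b_{\text{off},n}^Q\bigr) = \sum_{n\in\Z}\sum_{Q\in\mQ_n}p_Q(f-f_n)(p_Q+q_Q)\chi_Q + \sum_{n\in\Z}\sum_{Q\in\mQ_n}q_Q(f-f_n)p_Q\chi_Q.
\]
Using $p_Q + q_Q = q_{\widehat Q}$ (since $p_Q = q_{\widehat Q} - q_Q$) this becomes $\sum_n\sum_{Q\in\mQ_n}\bigl(p_Q(f-f_n)q_{\widehat Q} + q_Q(f-f_n)p_Q\bigr)\chi_Q$; in operator form on $\R^d$ this is $\sum_n\bigl(p_n(f-f_n)q_{n-1} + q_n(f-f_n)p_n\bigr)$. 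The point is that this is precisely the ``bad'' part appearing in the Cadilhac--Parcet-type decomposition $f = qfq + \sum_n p_n f_n p_n + \sum_n(p_n(f-f_n)q_{n-1} + q_n(f-f_n)p_n) + (\text{off-diagonal good terms that cancel})$; I would invoke the standard telescoping computation (as in \cite{Cadilhac2022Spectral, Parcet2009Pseudo-localization}) that starts from the two-sided expansion $f = \sum_{n,k}p_n f p_k$ together with Lemma \ref{properties of pn, qn}, the martingale identity $E_n(f) = f_n$, and $\sum_n p_n = q^\perp$, to conclude $f - \bigl(qfq + \sum_n p_n f_n p_n\bigr)$ equals exactly the expression displayed above. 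Thus $f = g + \widetilde b_{\text{d}} + b_{\text{d}}^0 + \widetilde b_{\text{off}} + b_{\text{off}}^0$.

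For positivity of $g = qfq + \sum_n p_n f_n p_n$: each summand is of the form $r f r^*$ with $r$ a projection and $f$ (resp.\ $f_n = E_n f$) positive, hence each term is positive; since the projections $q, p_n$ are pairwise orthogonal by \eqref{sum pn=1-q}, the sum converges (in the appropriate sense, using that $f\in\mN_{c,+}$ is compactly supported so only finitely many $n$ contribute on any bounded region, together with the $n\to-\infty$ stabilization $q_n = \Bone_\mN$ for $n\le n_\lambda(f)$) and is positive. For self-adjointness of $b$: $b_{\text{d},n}^Q = p_Q(f-f_n)p_Q\chi_Q$ is visibly self-adjoint since $p_Q$ is a projection and $f - f_n$ is self-adjoint; $b_{\text{off},n}^Q = p_Q(f-f_n)q_Q\chi_Q + q_Q(f-f_n)p_Q\chi_Q$ is self-adjoint because the two terms are adjoints of one another. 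Convolving with the real nonnegative scalar kernel $h_{r_n}$ preserves self-adjointness (convolution acts only on the $\R^d$ variable and commutes with the involution on $\mM$), so $\widetilde b_{\text{d},n}^Q$, $\widetilde b_{\text{off},n}^Q$, and hence their differences $b_{\text{d},n}^Q - \widetilde b_{\text{d},n}^Q$ etc., are all self-adjoint; summing preserves this. Therefore $b$ is self-adjoint, completing the proof.

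I do not expect a serious obstacle here: the only genuinely substantive input is the telescoping martingale identity underlying the Cadilhac-type decomposition, which is available in \cite{Cadilhac2022Spectral}; the new feature --- splitting each $b_{\text{d},n}^Q$ and $b_{\text{off},n}^Q$ into the convolution piece $b^Q * h_{r_n}$ plus the remainder $b^Q - b^Q * h_{r_n}$ --- is a trivial algebraic addition and zero subtraction, so it cannot affect the validity of the identity and only needs the observation that convolution with a fixed scalar mollifier is an $\mN$-bimodule-over-$L_\infty(\R^d)$-linear, involution-preserving operation. The mild care needed is purely with convergence of the infinite sums over $n\in\Z$, handled exactly as in the cited references via compact support of $f$ and the stabilization of Cuculescu's projections for $n \le n_\lambda(f)$.
\end{proof}
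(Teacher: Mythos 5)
Your proposal is correct and follows essentially the same route as the paper: both rely on the two-sided expansion $f=\sum_{n,k\in\widehat\Z}p_nf_{n\wedge k}p_k+\sum_{n,k\in\widehat\Z}p_n(f-f_{n\wedge k})p_k$ together with Lemma~\ref{properties of pn, qn} and the stabilization $q_n=\Bone_\mN$ for small $n$, and then observe that the convolution splitting is an exact add-and-subtract. The one cosmetic difference is direction of travel: you collapse the five listed pieces back to $\sum_n p_n(f-f_n)q_{n-1}+q_n(f-f_n)p_n$ via $p_Q+q_Q=q_{\widehat Q}$ and then cite the references for the telescoping identity $f=g+b$, whereas the paper derives that identity from scratch (the computations $\sum_{k>n}p_n(f-f_n)p_k=p_n(f-f_n)(q_n-q)$ and its mirror, followed by Lemma~\ref{properties of pn, qn} to kill the $q$-terms), and your statements about positivity of $g$ and self-adjointness of $b$ match the paper's brief closing remark. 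Your argument is sound as a sketch; if you want it self-contained you should actually carry out the telescoping rather than outsource it, since that computation is the only nontrivial step in the theorem.
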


    \begin{proof}
Set $p_{\infty}=q$, $f_{\infty}=f$, and $\widehat{\Z} = \Z\cup\set{\infty}$.
By (\ref{sum pn=1-q}), $f$ can be rewritten as
    \begin{align*}
    	f = \sum_{n\in\widehat{\Z}}\sum_{k\in\widehat{\Z}} p_n f_{n\wedge k}p_k + \sum_{n\in\widehat{\Z}}\sum_{k\in\widehat{\Z}} p_n(f-f_{n\wedge k})p_k
    	 =: g+b.
    \end{align*}

    For the good function $g$, Lemma \ref{properties of pn, qn} implies
    \begin{align*}
    g = qfq + \sum_{k\in\Z}qf_k p_k + \sum_{n\in\Z}p_n f_n q + \sum_{n\in\Z}\sum_{k\in\Z} p_n f_{n\wedge k}p_k
      = qfq + \sum_{n\in\Z}p_n f_n p_n.
    \end{align*}

For the bad function $b$,  we consider those terms with finite indices $n$ and $k$ satisfying $n\neq k$.
Since $p_k=q_{k-1}-q_k$, a direct calculation shows
    \begin{gather*}
    \sum_{n\in\Z}\sum_{k>n}p_n(f-f_{n})p_k = \sum_{n\in\Z}p_n(f-f_{n})(q_n-q), \\
    \sum_{n\in\Z}\sum_{k<n}p_n(f-f_{k})p_k
    = \sum_{k\in\Z}\sum_{n>k}p_n(f-f_{k})p_k = \sum_{k\in\Z}(q_k-q)(f-f_k)p_k,
    \end{gather*}
which implies that
    \begin{align*}
     b = \sum_{n\in\Z} p_n(f-f_n)p_n + \sum_{n\in\Z} \big(p_n(f-f_n)q_n + q_n(f-f_n)p_n \big)
    \end{align*}
by using Lemma \ref{properties of pn, qn} again.
Next, we insert a convolution operator to decompose $b$ further.
Let $h_r$ be defined as above.
For a fixed cube $Q\in\mQ_n$, let
    \begin{align*}
	\left\{\begin{array}{ll}
    b_{\text{d},n}^Q = p_Q(f-f_n)p_Q\chi_Q, \\
    \widetilde{b}_{\text{d},n}^Q = b_{\text{d},n}^Q * h_{\frac{\sqrt{d}}{2}\ell(Q)},
    \end{array}\right.
    \quad \text{and}\quad
    \left\{\begin{array}{ll}
    b_{\text{off},n}^Q = p_Q(f-f_n)q_Q\chi_Q + q_Q(f-f_n)p_Q\chi_Q, \\
    \widetilde{b}_{\text{off},n}^Q = b_{\text{off},n}^Q * h_{\frac{\sqrt{d}}{2}\ell(Q)}.
    \end{array}\right.
    \end{align*}
By these definitions as well as the disjointness of dyadic cubes, we can express the bad function $b$ as
    \begin{align*}
    \underbrace{\sum_{n\in\Z}\sum_{Q\in\mQ_n} \widetilde{b}_{\text{d},n}^Q}_{\widetilde{b}_{\text{d}} = \sum\limits_{n\in\Z} \widetilde{b}_{\text{d},n}}
    + \underbrace{\sum_{n\in\Z}\sum_{Q\in\mQ_n}(b_{\text{d},n}^Q-\widetilde{b}_{\text{d},n}^Q)}_{b_\text{d}^0 = \sum\limits_{n\in\Z}b_{\text{d},n}^0}
    + \underbrace{\sum_{n\in\Z}\sum_{Q\in\mQ_n}\widetilde{b}_{\text{off},n}^Q}_{\widetilde{b}_{\text{off}} =  \sum\limits_{n\in\Z} \widetilde{b}_{\text{off},n}}
    + \underbrace{\sum_{n\in\Z}\sum_{Q\in\mQ_n}(b_{\text{off},n}^Q-\widetilde{b}_{\text{off},n}^Q)}_{b_{\text{off}}^0 = \sum\limits_{n\in\Z} b_{\text{off},n}^0}.
    \end{align*}

    This yields the desired decomposition $f = g+ b = g + \widetilde{b}_{\text{d}} + b_{\text{d}}^0 + \widetilde{b}_{\text{off}} + b_{\text{off}}^0$; the positivity of $g$ and self-adjointness of $b$ follows immediatly from  their constructions.
    \end{proof}


The decomposition in Theorem \ref{nc CZ decomposition with convolution} admits the following properties:

    \begin{lemma}\label{properties of nc CZ decomposition with convolution}
    Let $\zeta_s$ be the projection given as in (\ref{projection zeta_s}).
    Then,
		\begin{enumerate}[(i)]
		\item $\norm{g}_{L_1(\mN)}\leq\norm{f}_{L_1(\mN)}$ and $\norm{g}_{L_{\infty}(\mN)}\leq 2^d\lambda$.
		\item For each $Q\in\mQ_n$, $\int_{\R^d} \widetilde{b}_{\text{d},n}^Q = 0$, 
		   and for any $x,$ $y\in\R^d$, $\zeta_s(x) \widetilde{b}_{\text{d},n}(y) \zeta_s(x) = 0$ when $|x-y|\leq (s-\sqrt{d})2^{-n}$.
		   Furthermore,
		   $\sum\limits_{n\in\Z}\sum \limits_{Q\in\mQ_n} \|b_{\text{d},n}^Q\|_{L_1(\mN)} \lesssim \norm{f}_{L_1(\mN)}$.
		\item For any $x,$ $y\in\R^d$, $\zeta_s(x) b_{\text{d},n}^0(y) \zeta_s(x) = 0$ when $|x-y|\leq (s-\sqrt{d})2^{-n}$.
		\item For each $Q\in\mQ_n$, $\int_{\R^d} \widetilde{b}_{\text{off},n}^Q = 0$, 
		   and for any $x,$ $y\in\R^d$, $\zeta_s(x) \widetilde{b}_{\text{off},n}(y) \zeta_s(x) = 0$ when $|x-y|\leq (s-\sqrt{d})2^{-n}$. 
		\item For any $x,$ $y\in\R^d$, $\zeta_s(x) b_{\text{off},n}^0(y) \zeta_s(x) = 0$ when $|x-y|\leq (s-\sqrt{d})2^{-n}$.
	    \end{enumerate}
    \end{lemma}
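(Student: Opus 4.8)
The plan is to verify the five items essentially in order, exploiting Cuculescu's estimates and the geometry of the mollifier $h_{r_n}$.

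For item (i), I would argue that $g = qfq + \sum_n p_n f_n p_n$ is a sum of pieces that are pairwise "orthogonally supported" in the sense that $\phi(g) = \tau$-trace over each block, so $\|g\|_{L_1} = \phi(qfq) + \sum_n \phi(p_n f_n p_n)$; since the $p_n$ are mutually orthogonal projections with $\sum_n p_n = q^\perp$ and $\phi(p_n f_n p_n) = \phi(p_n f p_n)$ (because $p_n \in \mN_n$ and $E_n$ preserves $\phi$), the sum telescopes to $\phi(qfq) + \phi(q^\perp f q^\perp) \le \phi(f) = \|f\|_{L_1}$. For the $L_\infty$ bound, on the range of $q$ we have $qfq = q f_{n_\lambda} q \le \lambda$ only after care; the cleaner route is the standard one: $q f_n q \le \lambda$ for all $n$ by (iii) of Lemma \ref{cuculescu's construction} in the limit, giving $\|qfq\|_\infty \le \lambda$, while on each $p_n$-block, $p_n f_n p_n = p_Q f_Q p_Q$ on $Q$, and since $p_Q = q_{\widehat Q} - q_Q \le q_{\widehat Q}$, we get $p_Q f_Q p_Q \le q_{\widehat Q} f_Q q_{\widehat Q}$; the spectral bound here comes from $f_Q = |Q|^{-1}\int_Q f \le |{\widehat Q}|^{-1}\cdot 2^d \int_{\widehat Q} f = 2^d f_{\widehat Q}$ combined with $q_{\widehat Q} f_{\widehat Q} q_{\widehat Q} \le \lambda q_{\widehat Q}$, yielding $\|p_n f_n p_n\|_\infty \le 2^d\lambda$. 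Since the $p_n$-blocks live over disjoint cubes within each $\mQ_n$ and $qfq$ is dominated by $2^d\lambda$, the total is $\le 2^d\lambda$.

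For items (ii)--(v), the cancellation assertions $\int \widetilde b_{\mathrm d,n}^Q = 0$ and $\int \widetilde b_{\mathrm{off},n}^Q = 0$ are immediate: convolution with $h_{r_n}$ (which integrates to $1$) commutes with integration, so $\int \widetilde b_{\mathrm d,n}^Q = \int b_{\mathrm d,n}^Q\int h_{r_n} = \int p_Q(f-f_n)p_Q\chi_Q$, and $\int_Q(f - f_n) = \int_Q f - |Q| f_Q = 0$; multiplying by the fixed projection $p_Q$ on both sides does not disturb this. The $L_1$-bound $\sum_{n,Q}\|b_{\mathrm d,n}^Q\|_{L_1} \le 2\|f\|_{L_1}$ follows from $\|p_Q(f-f_n)p_Q\chi_Q\|_{L_1(\mN)} \le \|p_Q f p_Q \chi_Q\|_{L_1} + \|p_Q f_Q p_Q\chi_Q\|_{L_1} \le \int_Q \tau(p_Q f p_Q) + \int_Q \tau(p_Q f_Q p_Q)$, and both sum (over disjoint $Q$ in each level, then over levels) to at most $\|f\|_{L_1}$ by the same telescoping as in (i), since $p_Q \le \Bone$. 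The support statements are the geometric heart: for a cube $Q \in \mQ_n$, the function $b_{\mathrm d,n}^Q$ is supported in $Q$, hence $\widetilde b_{\mathrm d,n}^Q = b_{\mathrm d,n}^Q * h_{r_n}$ is supported in the $r_n$-neighbourhood of $Q$, which is contained in the concentric ball of radius $r_n + r_n = 2r_n$ about $c(Q)$ — here recall from the Remark that $r_n$ is exactly the circumradius of $Q$. On the other hand, $\zeta_s \le (p_Q\chi_{(2s+1)Q})^\perp$, so $\zeta_s(x)$ annihilates $p_Q$ whenever $x \in (2s+1)Q$; since $(2s+1)Q$ contains the ball of radius $(2s+1)\cdot\frac{\ell(Q)}{2} = \frac{2s+1}{\sqrt d}\,r_n$ about $c(Q)$, we get $\zeta_s(x)\,\widetilde b_{\mathrm d,n}^Q(y)\,\zeta_s(x) = 0$ unless $x$ lies outside that ball and $y$ lies inside the radius-$2r_n$ ball about $c(Q)$, forcing $|x - y| \ge \frac{2s+1}{\sqrt d}\,r_n - 2r_n = \big(\frac{2s+1}{\sqrt d} - 2\big)r_n$. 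Summing over $Q \in \mQ_n$ (whose dilates $(2s+1)Q$ need not be disjoint, but the conclusion is a pointwise-in-$Q$ annihilation that passes to the $\vee$ defining $\zeta_s^\perp$) gives item (ii); items (iv) and (v) are identical with $b_{\mathrm d}$ replaced by $b_{\mathrm{off}}$, noting $b_{\mathrm{off},n}^Q$ is likewise supported in $Q$ and involves $p_Q$ on one side, which $\zeta_s(x)$ with $x \in (2s+1)Q$ kills; and item (iii), item (v) follow since $b_{\mathrm d,n}^0 = b_{\mathrm d} - \widetilde b_{\mathrm d}$ level by level is a difference of two functions each with the stated vanishing property (the un-mollified $b_{\mathrm d,n}^Q$ being supported in $Q \subset (2s+1)Q \subset$ ball of radius $\frac{2s+1}{\sqrt d}r_n$, which is even better).

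The main obstacle I anticipate is bookkeeping the spectral comparison $p_Q f_Q p_Q \le 2^d\lambda\, p_Q$ cleanly — i.e. tracking the factor $2^d$ through the passage from $Q$ to its dyadic father $\widehat Q$ and making sure the operator inequality (not just a scalar one) is justified using $q_Q \le q_{\widehat Q}$ and the commutation relation in \eqref{q_Q f_Q q_Q < lambda q_Q} — together with being careful that in the support statements one genuinely has $\zeta_s \le (p_Q \chi_{(2s+1)Q})^\perp$ for every individual $Q$ simultaneously (which is exactly the content of the $\bigvee$ in \eqref{projection zeta_s}), so that the pointwise annihilation is legitimate for each term of the sum defining $\widetilde b_{\mathrm d,n}$. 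Everything else is routine once the geometry "support of $b^Q_{\cdot,n}$ $\subset Q$, convolution spreads it by the circumradius $r_n$, and $(2s+1)Q$ contains the ball of radius $\frac{2s+1}{\sqrt d}r_n$" is in place.
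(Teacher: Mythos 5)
Your proposal is correct and follows essentially the same route as the paper: the cancellation statements via $\int h_{r_n}=1$ and $\int_Q(f-f_n)=0$, the $L_1$-bound via the triangle inequality together with $\phi(f_n p_n)=\phi(fp_n)$ and $\sum_n p_n = q^\perp$, and the support statements via the geometric observation that $\zeta_s(x)p_Q=0$ for $x\in(2s+1)Q$ while the mollified pieces live within $2r_n$ of $c(Q)$, so the triangle inequality forces $|x-y|>\big(\tfrac{2s+1}{\sqrt d}-2\big)r_n$. The only (harmless) difference is item $(i)$, which the paper simply cites from \cite{Cadilhac2018Weak, Cadilhac2022Spectral} whereas you reproduce the standard argument (including the $f_Q\le 2^d f_{\widehat Q}$ comparison and the mutual orthogonality of $q$ and the $p_n$, which is what actually makes the blocks add up to $\le 2^d\lambda$ rather than merely summing over $n$).
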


\begin{proof}
$(i)$ follows directly from Parcet's noncommutative Calder\'on-Zygmund decomposition, as detailed in \cite{Cadilhac2018Weak} or \cite{Cadilhac2022Spectral}.

Next, consider $(ii)$.
The proof contains three parts: the cancellation condition, the support condition, and the boundedness.

By the definition of $b_{\text{d},n}^Q$ in Theorem \ref{nc CZ decomposition with convolution}, we have $\int_{\R^d} b_{\text{d},n}^Q = 0$ for each $Q\in\mQ_n$.
Since $\int_{\R^d} h_{\frac{\sqrt{d}}{2}\ell(Q)} = 1$, it follows that
    \begin{align*}
	\int_{\R^d} \widetilde{b}_{\text{d},n}^Q(x) \dif x
	= \int_{\R^d} b_{\text{d},n}^Q(y) \Big(\int_{\R^d} h_{\frac{\sqrt{d}}{2}\ell(Q)}(x-y) \dif x \Big) \dif y
	=0,
    \end{align*}
which is the required cancellation property for $\widetilde{b}_{\text{d},n}^Q$.

Now we consider the support condition.
For each $Q\in\mQ$, the construction of $\zeta_s$ in (\ref{projection zeta_s}) implies
    \begin{align} \label{zeta(x)pQ=0}
	\zeta_s(x)p_Q =p_Q\zeta_s(x) = 0 \quad\text{if}\quad x\in (2s+1)Q.
    \end{align}
Indeed, if $x\in (2s+1)Q$, then
    \begin{align*}
	\zeta_s(x)
	= 1- \bigvee_{Q\in\mQ} p_Q \chi_{(2s+1)Q}(x)
	\leq 1-p_Q,
    \end{align*}
implying
    \begin{align*}
	\zeta_s(x)p_Q = \zeta_s(x) (1-p_Q)p_Q=0,
    \end{align*}
and the same holds for $p_Q\zeta_s(x)$, which lead to (\ref{zeta(x)pQ=0}).
Then, note that $\supp(h_{\frac{\sqrt{d}}{2}\ell(Q)})\subset B_{\frac{\sqrt{d}}{2}\ell(Q)}$, we write $\zeta_s(x) \widetilde{b}_{\text{d},n}(y) \zeta_s(x)$ as follows:
    \begin{align*}
    &\zeta_s(x) \bigg( \sum_{Q\in\mQ_n}\int_{ B_{\frac{\sqrt{d}}{2}\ell(Q)} } b_{\text{d},n}^Q(u) h_{\frac{\sqrt{d}}{2}\ell(Q)}(y-u) \dif u \bigg) \zeta_s(x)\\
    &\quad = \sum_{Q\in\mQ_n}\int_{u\in \big(B_{\frac{\sqrt{d}}{2}\ell(Q)}(y)\bigcap Q\big)}  \Big(\zeta_s(x) p_{Q}(f-f_n)(u) p_{Q} \zeta_s(x) \Big)  h_{\frac{\sqrt{d}}{2}\ell(Q)}(y-u) \dif u.
    \end{align*}
As mentioned, we let $Q_{u,n}$ be the unique cube in $\mQ_n$ that contains $u$.
Hence, with (\ref{zeta(x)pQ=0}), it is enough to show that
    \begin{align} \label{x in (2s+1)Qun}
	x\in (2s+1)Q_{u,n} \quad\text{if}\quad |x-y|\leq (s-\sqrt{d})2^{-n}.
    \end{align}
In fact, for $u\in B_{\frac{\sqrt{d}}{2}\ell(Q_{u,n})}(y)\bigcap Q_{u,n}$,
    \begin{align*}
	|x-c(Q_{u,n})|
	&\leq |x-y| + |y-u| + |u-c(Q_{u,n})| \\
	&\leq (s-\sqrt{d})\ell(Q_{u,n}) + \frac{\sqrt{d}}{2}\ell(Q_{u,n}) + \frac{\sqrt{d}}{2}\ell(Q_{u,n})\\
	&= s \ell(Q_{u,n})
    \end{align*}
implying (\ref{x in (2s+1)Qun}).

The boundedness of $b_{\text{d},n}^Q$ results from the fact that $\phi(f p_n) = \phi(f_n p_n)$ together with (\ref{sum pn=1-q}).
This means,
    \begin{align*}
    \sum_{n\in\Z}\sum_{Q\in\mQ_n}\|b_{\text{d},n}^Q\|_{L_1(\mN)}
    &\leq \sum_{n\in\Z}\sum_{Q\in\mQ_n} \big(\phi(fp_Q\chi_Q) + \phi(f_n p_Q\chi_Q)\big) \\
    &= \sum_{n\in\Z} \big(\phi(fp_n) + \phi(f_n p_n)\big) \\
    &\leq 2\phi(f).
    \end{align*}
This completes the proof of $(ii)$.

$(iii)$
By the support condition in $(ii)$, we may restrict our attention to the nonconvolution part of $b_{\text{d},n}^0$.
That is, by writing
    \begin{align*}
    \zeta_s(x) b_{\text{d},n}^0 \zeta_s(x) = \sum\limits_{Q\in\mQ_n}\zeta_s(x) b_{\text{d},n}^Q(y) \zeta_s(x) - \sum\limits_{Q\in\mQ_n}\zeta_s(x) \widetilde{b}_{\text{d},n}^Q(y) \zeta_s(x),	
    \end{align*}
we only have to estimate the first sum, since the second one has already been considered in $(ii)$.
The required estimate is obtained by using the same argument as in $(ii)$, and we therefore omit the details.

Finally, the proofs of $(iv)$ and $(v)$ follow from analogous reasoning and are also omitted.

\end{proof}

\begin{remark}\label{the value of s}
We will fix $s= [50\sqrt{d}]$ throughout this paper.
Furthermore, the projection $\zeta_s$ is denoted simply by $\zeta$ when $s$ is fixed.
\end{remark}


\begin{remark}\label{sums of bd from 1 to infty}
Recall the construction of $q_n$: there exists an integer $n_{\lambda}(f)$ such that $q_n = 1_{\mN}$ when $n\leq n_{\lambda}(f)$.
Without loss of generality, we may set $n_{\lambda}(f) = 0$.
This leads to
     \begin{align*}
	 \widetilde{b}_{\text{d}} = \sum_{n\geq 1} \widetilde{b}_{\text{d},n}, \quad
	 \ b_{\text{d}}^0 = \sum_{n\geq 1} b_{\text{d},n}^0, \quad
	 \ \widetilde{b}_{\text{off}} = \sum_{n\geq 1} \widetilde{b}_{\text{off},n}, \quad
	 \ b_{\text{off}}^0 = \sum_{n\geq 1} b_{\text{off},n}^0,
     \end{align*}
thereby simplifying the summation.
\end{remark}


\section{Two reductions}\label{Section: Two reductions}

This section introduces two key lemmas (Lemma \ref{reduction to real kernel} and Lemma \ref{reduction to lacunary sequence}), which reduce the proof of the main theorem to the cases of real kernels and lacunary sequences.
For their proofs, we refer the reader to \cite{hong2023maximal}.

\textbf{Step 1.}
Let $T_{\epsilon}$ be a noncommutative Calder\'on-Zygmund truncated operator with a complex kernel $K$ (defined as in (\ref{def of T epsilon f})).
We decompose $K$ into its real and imaginary parts: $\Re(K)$ and $\Im(K)$.
The truncated operators associated with $\Re(K)$ and $\Im(K)$ are then given by
    \begin{align*}
	\Re(T_{\epsilon})f(x) = \int_{|x-y|>\epsilon} \Re(K)(x,y) f(y) \dif y
    \end{align*}
and
    \begin{align*}
	\Im(T_{\epsilon})f(x) = \int_{|x-y|>\epsilon} \Im(K)(x,y) f(y) \dif y.
    \end{align*}
\begin{lemma}\label{reduction to real kernel}
	Let $T$ be a noncommutative Calder\'on-Zygmund operator (defined as in (\ref{def of Tf})) whose kernel $K$ satisfies (\ref{Size condition}) and the $L_1$-integral condition (\ref{L1 integral condition}).
	Let $T_{\epsilon}$ be defined as in (\ref{def of T epsilon f}).
	Then,
	\begin{enumerate}[(i)]
	\item Both $\Re(K)$ and $\Im(K)$ satisfy (\ref{Size condition}) and (\ref{L1 integral condition}).
	\item If $(T_{\epsilon})_{\epsilon>0}$ is of strong type $(p_0,p_0)$ for some $p_0\in (1,\infty)$, then, both $(\Re(T_{\epsilon}))_{\epsilon>0}$ and $(\Im(T_{\epsilon}))_{\epsilon>0}$ are of strong type $(p_0,p_0)$.
	\item If $(\Re(T_{\epsilon}))_{\epsilon>0}$ and $(\Im(T_{\epsilon}))_{\epsilon>0}$ are of weak type $(1,1)$, so is $(T_{\epsilon})_{\epsilon>0}$.
	\end{enumerate}
\end{lemma}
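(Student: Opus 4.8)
The three parts are essentially elementary and follow from the linearity of the operators $T_\epsilon$ together with the triangle inequality in the relevant (quasi-)norms, so the plan is to handle them in order.

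\emph{Part (i).} The size condition \eqref{Size condition} is immediate for $\Re(K)$ and $\Im(K)$ since $|\Re(z)|\le|z|$ and $|\Im(z)|\le|z|$. For the $L_2$-mean H\"ormander condition, write $\Re(K)(x,y+v)-\Re(K)(x,y)=\Re\big(K(x,y+v)-K(x,y)\big)$, so the pointwise inequality $|\Re(w)|\le |w|$ gives, for every $m\ge 1$, every $y\in\R^d$, and every $R>0$,
\[
\frac{(2^mR)^{d}}{|B_R|}\int_{|v|\le R}\int_{2^mR\le|x-y|\le 2^{m+1}R}|\Re(K)(x,y+v)-\Re(K)(x,y)|^2\,\dif x\,\dif v
\;\le\; H_2(m)^2,
\]
and likewise for $\Im(K)$; after taking square roots and summing over $m$ we conclude that the quantities $H_2(m)$ associated to $\Re(K)$ and to $\Im(K)$ are each dominated by the one for $K$, hence \eqref{L2 mean Hormander condition} holds for both.

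\emph{Parts (ii) and (iii).} Since $K=\Re(K)+i\,\Im(K)$, for each $\epsilon>0$ and each admissible $f$ we have the pointwise (in $x$) identities $\Re(T_\epsilon)f=\tfrac12(T_\epsilon f+ \overline{T_\epsilon}\, f)$ and $\Im(T_\epsilon)f=\tfrac1{2i}(T_\epsilon f-\overline{T_\epsilon}\,f)$, where $\overline{T_\epsilon}$ is the operator with kernel $\overline{K(x,y)}$; equivalently $\Re(T_\epsilon)f=\tfrac12(T_\epsilon f+(T_\epsilon f^*)^*)$ and similarly for $\Im$, using that $f$ ranges over self-adjoint-friendly classes and that $L_p$-norms and the maximal norms $\norm{\cdot}_{L_{p_0}(\mN;\ell_\infty)}$, $\norm{\cdot}_{\Lambda_{1,\infty}(\mN;\ell_\infty)}$ are invariant under $x\mapsto x^*$. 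Thus $\norm{(\Re(T_\epsilon)f)_\epsilon}\le \norm{(T_\epsilon f)_\epsilon}$ up to the universal constant coming from the $\tfrac12$ and the (quasi-)triangle inequality of the norm in question, and the same for $\Im(T_\epsilon)$; applying this with the $L_{p_0}(\mN;\ell_\infty)$-norm gives (ii). Conversely $T_\epsilon=\Re(T_\epsilon)+i\,\Im(T_\epsilon)$, so $\norm{(T_\epsilon f)_\epsilon}_{\Lambda_{1,\infty}(\mN;\ell_\infty)}\lesssim \norm{(\Re(T_\epsilon)f)_\epsilon}_{\Lambda_{1,\infty}(\mN;\ell_\infty)}+\norm{(\Im(T_\epsilon)f)_\epsilon}_{\Lambda_{1,\infty}(\mN;\ell_\infty)}$ by the quasi-triangle inequality for the weak maximal norm, which yields (iii).

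\emph{Main obstacle.} There is no serious analytic difficulty here; the only point requiring a little care is bookkeeping in the noncommutative maximal and weak-maximal norms $\norm{\cdot}_{L_{p_0}(\mN;\ell_\infty)}$ and $\norm{\cdot}_{\Lambda_{1,\infty}(\mN;\ell_\infty)}$ (recalled in Appendix \ref{Appendix: Noncommutative Lp spaces and maximal norms}): one must check that these norms are genuinely invariant under the adjoint operation $x\mapsto x^*$ and satisfy a (quasi-)triangle inequality with a constant independent of the family $(\epsilon)_{\epsilon>0}$, so that decomposing the operator family $(T_\epsilon f)_\epsilon = (\Re(T_\epsilon)f)_\epsilon + i(\Im(T_\epsilon)f)_\epsilon$ costs only an absolute constant. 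Both facts are standard for these norm functionals, and once they are in hand the lemma follows; since the statement already refers the reader to \cite{hong2023maximal}, we merely indicate these points rather than reproving them.
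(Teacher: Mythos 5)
Your proof is correct. Note that the paper does not supply a proof of Lemma \ref{reduction to real kernel}; it refers the reader to \cite{hong2023maximal}. Your argument is the standard one used in that reference, and all the steps check out. In particular, the identities
\[
\Re(T_\epsilon)f=\tfrac12\bigl(T_\epsilon f+(T_\epsilon f^*)^*\bigr),
\qquad
\Im(T_\epsilon)f=\tfrac1{2i}\bigl(T_\epsilon f-(T_\epsilon f^*)^*\bigr)
\]
hold for every $f$ (not only self-adjoint $f$), since $K(x,y)$ is a scalar and $(T_\epsilon f^*)^*(x)=\int\overline{K(x,y)}f(y)\,\dif y$; the invariance of $\norm{\cdot}_{L_{p_0}(\mN;\ell_\infty)}$ under $(x_n)\mapsto(x_n^*)$ is immediate from the factorization definition ($x_n=ay_nb\Leftrightarrow x_n^*=b^*y_n^*a^*$); and the quasi-triangle inequality together with unimodular-scalar invariance of $\norm{\cdot}_{\Lambda_{1,\infty}(\mN;\ell_\infty)}$ give part (iii) from $T_\epsilon=\Re(T_\epsilon)+i\,\Im(T_\epsilon)$. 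Part (i) is just the pointwise bounds $|\Re(w)|,|\Im(w)|\le|w|$ applied inside the defining integrals. One minor wording caveat: the phrase about $f$ ranging ``over self-adjoint-friendly classes'' is unnecessary, as the identities above are unconditional; but this does not affect the correctness of the argument.
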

By this lemma, we may assume that the kernel $K$ is real throughout this paper.

\textbf{Step 2.}
Let $\Phi$ be a smooth radial nonnegative function on $\R^d$ such that \begin{align*}
	\supp\Phi\subset \big\{x\in\R^d: 1/2 \leq |x| \leq 2 \big\}
	\quad\text{and}\quad
	\sum_{j\in\Z}\Phi_j(x) = 1,\ x\in\R^d\setminus \{0\},
    \end{align*}
where $\Phi_j(x):= \Phi(\frac{x}{2^j\sqrt{d}})$.
Thus, the truncated singular integrals $T_{\epsilon}f$ can be written as
    \begin{align*}
	T_{\epsilon}f
	= \sum_{j\in\Z} \int_{|x-y|>\epsilon} K(x,y) \Phi_j(x-y)f(y) \dif y.
    \end{align*}
From the conditions $|x-y| > \epsilon$ and $\supp (\Phi_j) \subset \{ x \in \mathbb{R}^d : 2^{j-1}\sqrt{d} \leq |x| \leq 2^{j+1}\sqrt{d} \}$, we see that for $i_{\epsilon} = \big[ \log_2 (\frac{\epsilon}{2\sqrt{d}}) \big] +1$, the following equality holds (when $\epsilon > 0$ is sufficiently small):
    \begin{align*}
	T_{\epsilon}f(x)
	= \sum_{j\geq  i_{\epsilon}} \int_{\R^d} K(x,y) \Phi_j(x-y)f(y) \dif y
	+ \int_{|x-y|>\epsilon} K(x,y) \Phi_{i_{\epsilon}}(x-y)f(y) \dif y.
    \end{align*}
It is convenient to introduce the following notation:
    \begin{gather}
     \label{Kj}K_j(x,y) = K(x,y)\Phi_j(x-y), \\
     \label{Sif(x)} S_i f(x)  =  \sum_{j\geq i} \int_{\R^d} K_j(x,y)f(y) \dif y,  \\
     \label{T epsilon,i epsilon f(x)} T_{\epsilon}^{i_{\epsilon}} f(x) = \int_{|x-y|>\epsilon} K_{i_{\epsilon}}(x,y)f(y) \dif y.
    \end{gather}
Hence,
    \begin{align*}
	T_{\epsilon}f(x) = S_{i_{\epsilon}}f(x) + T_{\epsilon}^{i_{\epsilon}} f(x).
    \end{align*}

    \begin{lemma}\label{reduction to lacunary sequence}
	Let $T$ be a noncommutative Calder\'on-Zygmund operator (defined as in (\ref{def of Tf})) whose kernel $K$ satisfies (\ref{Size condition}) and the $L_1$-integral condition (\ref{L1 integral condition}).
	Let $T_{\epsilon}^{i_{\epsilon}}$ be defined as in (\ref{T epsilon,i epsilon f(x)}).
	Then
	\begin{enumerate}[(i)]
	\item $(T_{\epsilon}^{i_{\epsilon}})_{\epsilon>0}$ is of strong type $(p,p)$ for $1<p\leq\infty$.
	\item $(T_{\epsilon}^{i_{\epsilon}})_{\epsilon>0}$ is of weak type $(1,1)$.
	\end{enumerate}
    \end{lemma}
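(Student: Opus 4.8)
The plan is to reduce Lemma~\ref{reduction to lacunary sequence} to the known boundedness of the maximal truncated operator $(T_\epsilon)_{\epsilon>0}$ as formulated in the hypothesis of Theorem~\ref{Main theorem}, exploiting that the extra piece $T_\epsilon^{i_\epsilon}$ is a \emph{single dyadic annulus} worth of kernel, hence essentially a maximal averaging operator. Concretely, one first observes that $K_{i_\epsilon}(x,y)=K(x,y)\Phi_{i_\epsilon}(x-y)$ is supported in $2^{i_\epsilon-1}\sqrt d\le |x-y|\le 2^{i_\epsilon+1}\sqrt d$, and by the choice $i_\epsilon=[\log_2(\epsilon/2\sqrt d)]+1$ one has $2^{i_\epsilon}\sqrt d \sim \epsilon$; combined with the size condition \eqref{Size condition}, $|K_{i_\epsilon}(x,y)|\lesssim |x-y|^{-d}\lesssim \epsilon^{-d}$ on the support. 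Therefore
\[
\bigl| T_\epsilon^{i_\epsilon} f(x)\bigr| = \Bigl| \int_{|x-y|>\epsilon, \, |x-y|\le 2^{i_\epsilon+1}\sqrt d} K_{i_\epsilon}(x,y) f(y)\,\dif y\Bigr|
\lesssim \frac{1}{\epsilon^d}\int_{|x-y|\le c\epsilon} |f(y)|\,\dif y,
\]
with the inequality understood in the operator sense after taking absolute values (one passes through $|T_\epsilon^{i_\epsilon}f|\le \int |K_{i_\epsilon}(x,y)|\,|f(y)|\,\dif y$, valid in any von Neumann algebra). Thus pointwise in $x$ and uniformly in $\epsilon$, $T_\epsilon^{i_\epsilon}f$ is dominated by a constant multiple of the noncommutative Hardy--Littlewood maximal function $M f(x) = \sup_{B\ni x} \frac{1}{|B|}\int_B |f|$.

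The key step is then to invoke the noncommutative Hardy--Littlewood maximal inequality: $M$ is of strong type $(p,p)$ for $1<p\le\infty$ (Mei \cite{Mei2007Operator}, or the vector-valued Doob-type bound via the dyadic maximal operator and Cuculescu's estimate) and of weak type $(1,1)$. Since the domination $\|(T_\epsilon^{i_\epsilon}f)_{\epsilon>0}\|_{L_p(\mN;\ell_\infty)} \lesssim \|Mf\|_{L_p(\mN)}$ holds — here $\ell_\infty$ is realised by the single nonnegative operator-valued majorant $c\,Mf$, which dominates the whole family $(|T_\epsilon^{i_\epsilon}f|)_{\epsilon}$ simultaneously — parts (i) and (ii) follow immediately: for (i), $\|(T_\epsilon^{i_\epsilon}f)_{\epsilon>0}\|_{L_p(\mN;\ell_\infty)}\lesssim\|Mf\|_{L_p(\mN)}\lesssim\|f\|_{L_p(\mN)}$ for $1<p\le\infty$; for (ii), $\|(T_\epsilon^{i_\epsilon}f)_{\epsilon>0}\|_{\Lambda_{1,\infty}(\mN;\ell_\infty)}\lesssim\|Mf\|_{\Lambda_{1,\infty}(\mN)}\lesssim\|f\|_{L_1(\mN)}$.

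The main technical obstacle is making the ``domination by a single majorant'' step rigorous in the noncommutative maximal-norm framework: unlike in the scalar case, $(T_\epsilon^{i_\epsilon}f)_{\epsilon>0}$ is a family of self-adjoint (not positive) operators, so one must be careful about what ``$\|(a_\epsilon)_\epsilon\|_{L_p(\mN;\ell_\infty)}$'' means and verify that if there is a positive $b\in L_p(\mN)$ with $-b\le a_\epsilon\le b$ for all $\epsilon$ then $\|(a_\epsilon)_\epsilon\|_{L_p(\mN;\ell_\infty)}\le\|b\|_{L_p(\mN)}$; this is built into the definition of the maximal norm recalled in Appendix~\ref{Appendix: Noncommutative Lp spaces and maximal norms}, but one should record it as a small lemma. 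A secondary point is that $K_{i_\epsilon}$ may be complex, but after Lemma~\ref{reduction to real kernel} we assume $K$ real, and in any case the size bound used here is insensitive to this. Finally, one needs the elementary inequality $\bigl|\int K(x,y)f(y)\,\dif y\bigr|\le \int |K(x,y)|\,|f(y)|\,\dif y$ for operator-valued $f$, which is standard (e.g. by approximating the integral by Riemann sums and using the triangle inequality for the operator modulus). Since all of this is routine and the result is essentially folklore — indeed it is exactly the reduction carried out in \cite{hong2023maximal} — we only sketch it and refer the reader there for the complete details.
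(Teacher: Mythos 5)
The overall strategy—dominating $T_\epsilon^{i_\epsilon}f$ pointwise in $x$ by a single-scale average at scale $\sim\epsilon$ and then invoking the noncommutative Hardy--Littlewood maximal inequality (Mei/Junge)—is the right one and is what \cite{hong2023maximal} does; the paper itself only cites that reference. The geometry of the support of $K_{i_\epsilon}$ and the size bound $|K_{i_\epsilon}(x,y)|\lesssim\epsilon^{-d}$ on it are correct.

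However, there is a genuine error in the domination step. You assert that $\bigl|\int K(x,y)f(y)\,\dif y\bigr|\le\int|K(x,y)|\,|f(y)|\,\dif y$ holds ``in any von Neumann algebra'' and that it follows by ``approximating the integral by Riemann sums and using the triangle inequality for the operator modulus.'' There is no such triangle inequality: for self-adjoint operators, $|a+b|\le|a|+|b|$ is false in general. A concrete $2\times2$ counterexample is $a=\begin{pmatrix}1&0\\0&0\end{pmatrix}$, $b=\begin{pmatrix}0&1\\1&0\end{pmatrix}$, for which $|a|+|b|-|a+b|$ has a negative eigenvalue. Consequently the Riemann-sum argument does not deliver the claimed pointwise operator inequality, and the displayed bound $|T_\epsilon^{i_\epsilon}f(x)|\lesssim\epsilon^{-d}\int_{|x-y|\le c\epsilon}|f(y)|\dif y$ is unjustified as written.

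The correct route, and the one implicit in the source you cite, avoids the modulus of $T_\epsilon^{i_\epsilon}f$ entirely. After reducing to $K$ real (Lemma~\ref{reduction to real kernel}) and to $f$ self-adjoint (then positive), write the truncated real kernel as a difference of nonnegative kernels $K_{i_\epsilon}\chi_{\{|x-y|>\epsilon\}}=K^+_\epsilon-K^-_\epsilon$ with $0\le K^\pm_\epsilon\lesssim\epsilon^{-d}\chi_{\{\epsilon<|x-y|\le 2\epsilon\}}$. For a nonnegative scalar kernel $k$ and self-adjoint $f$, $-|f|\le f\le|f|$ immediately gives the \emph{two-sided operator order} bound $-\int k\,|f|\le\int k\,f\le\int k\,|f|$. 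Applying this to $K^\pm_\epsilon$ and combining yields
$-c\,A_{2\epsilon}|f|(x)\le T_\epsilon^{i_\epsilon}f(x)\le c\,A_{2\epsilon}|f|(x)$,
where $A_r g(x)=|B_r|^{-1}\int_{|x-y|\le r}g(y)\dif y$. This is exactly the form of domination that the self-adjoint characterization of $L_p(\mN;\ell_\infty)$ and $\Lambda_{1,\infty}(\mN;\ell_\infty)$ requires: one then invokes the noncommutative maximal-function estimates for the family $(A_r)_{r>0}$, not a pointwise ``$Mf$,'' which is not a well-defined single operator in the noncommutative setting (a second, minor imprecision in your sketch). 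With these two repairs your argument is complete and matches the cited proof.
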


The preceding two lemmas imply that Theorem \ref{Main theorem} follows directly from the weak type (1,1) boundedness of $S_i$ given in (\ref{Sif(x)}).
Thus, it is enough to prove
\begin{theorem}\label{Second theorem}
	Let $T$ be a noncommutative Calder\'on-Zygmund operator defined as in (\ref{def of Tf}) with the real kernel $K$ satisfying (\ref{Size condition}) and the $L_1$-integral condition (\ref{L1 integral condition}).
	Let $S_i$ be defined as in (\ref{Sif(x)}).
	Then, the sequence of operators $(S_i)_{i\in\Z}$ is of weak type $(1,1)$, that is, for any $f\in L_1(\mN)$,
	\[
	\norm{(S_i f)_{i\in\Z}}_{\Lambda_{1,\infty}(\mN;\ell_{\infty})} \lesssim \norm{f}_{L_1(\mN)}.
	\]
	More precisely, for any $f\in L_1(\mN)$ and $\lambda>0$, there exists a projection $e\in\mN$ such that
	\[
	\sup_{i\in\Z}\norm{e S_if e}_{L_{\infty(\mN)}} \leq \lambda \quad\text{and}\quad  \phi(e^{\bot})\lesssim \lambda^{-1} \norm{f}_{L_1(\mN)} .
	\]
\end{theorem}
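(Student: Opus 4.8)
The plan is to fix $f\in\mN_{c,+}$ and $\lambda>0$, apply the refined noncommutative Calder\'on-Zygmund decomposition (Theorem \ref{nc CZ decomposition with convolution}) with $s=[6\sqrt d]$, and combine the resulting projection with the Cuculescu projection $q=\bigwedge_n q_n$ and the projection $\zeta$ of \eqref{projection zeta_s}. Concretely I would take the test projection to be $e=q\wedge\zeta$ (possibly shrunk by a further martingale-type projection that kills the ranges of all the $p_n$ below scale related to the action of $S_i$); by Lemma \ref{cuculescu's construction}(iv) and Lemma \ref{boundedness of 1-zeta} we get $\phi(e^\perp)\lesssim \lambda^{-1}\norm f_{L_1(\mN)}$, with the implicit constant depending only on $d$ through $(2s+1)^d$. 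It then remains to bound $\sup_i\norm{eS_ife}_\infty$ by $C\lambda$ for each of the five pieces $g,\ \widetilde b_{\text d},\ b_{\text d}^0,\ \widetilde b_{\text{off}},\ b_{\text{off}}^0$ separately.

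For the good part $g$: since $\norm g_{L_\infty(\mN)}\le 2^d\lambda$ and $\norm g_{L_1(\mN)}\le\norm f_{L_1(\mN)}$ by Lemma \ref{properties of nc CZ decomposition with convolution}(i), interpolation gives $g\in L_{p_0}(\mN)$ with $\norm g_{p_0}^{p_0}\lesssim \lambda^{p_0-1}\norm f_1$; applying the assumed strong $(p_0,p_0)$ maximal bound for $(T_\epsilon)$ — equivalently (via Lemma \ref{reduction to lacunary sequence}) for $(S_i)$ — and Chebyshev, one produces a projection of small trace on which $\sup_i\norm{\cdot S_ig\cdot}_\infty\lesssim\lambda$; this is the standard route and I would absorb that projection into $e$. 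For the convolution bad parts $\widetilde b_{\text d}$ and $\widetilde b_{\text{off}}$: here the key is the cancellation $\int\widetilde b^Q_{\cdot,n}=0$ together with the smoothness of $h_{r_n}$, so that $\widetilde b^Q_{\cdot,n}$ genuinely behaves like a nice (smooth, mean-zero, $L_1$-controlled) atom at scale $r_n$; I would estimate $eS_i(\widetilde b_{\text d,n})e$ and $eS_i(\widetilde b_{\text{off},n})e$ by writing $S_i=\sum_{j\ge i}K_j$, using the support restriction from Lemma \ref{properties of nc CZ decomposition with convolution}(ii),(iv) — namely $\zeta(x)\widetilde b_{\cdot,n}(y)\zeta(x)=0$ when $|x-y|\le(\frac{2s+1}{\sqrt d}-2)r_n\ge 4r_n$ — to discard the terms with $2^j\sqrt d\lesssim r_n$, and then exploiting the $L_2$-mean H\"ormander condition \eqref{L2 mean Hormander condition} after convolving the kernel difference against $h_{r_n}$; the mean over $|v|\le R$ in $H_2(m)$ is precisely what matches the convolution with $h_{r_n}$. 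Summing $H_2(m)$ over $m$ and over scales $n$, against $\sum_n\sum_Q\norm{b^Q_{\text d,n}}_1\le 2\norm f_1$, yields the $L_1\to L_{1,\infty}$-type bound in the operator-weak sense, i.e.\ produces the required projection.

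For the remainder bad parts $b_{\text d}^0$ and $b_{\text{off}}^0$: these have no cancellation but they do have the same support separation $\zeta(x)b^0_{\cdot,n}(y)\zeta(x)=0$ for $|x-y|\lesssim r_n$ (Lemma \ref{properties of nc CZ decomposition with convolution}(iii),(v)), and crucially $b^Q_{\cdot,n}-\widetilde b^Q_{\cdot,n}$ is small in the sense that it is an $L_1$ function whose "frequency localization" forces $S_i$ applied to it to see only scales $2^j\gtrsim r_n$; I would use the size condition \eqref{Size condition} on $K_j$ for $j$ with $2^j\gtrsim r_n$ together with the fact that $b^Q_{\cdot,n}-b^Q_{\cdot,n}*h_{r_n}$ has $L_1$-norm controlled by $\norm{b^Q_{\cdot,n}}_1$ and is supported (essentially) in a fixed dilate of $Q$, so that the off-support geometry plus $\ell^1$-summation of $\sum_n\sum_Q\norm{b^Q_{\cdot,n}}_1$ closes the estimate. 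In all the bad-part estimates the passage from scalar atomic bounds to the operator-weak statement is handled by the Cuculescu/Cadilhac machinery: one estimates $\phi$ of a spectral projection of $\sum_n e S_i(\cdot)_n e$ via the triangle inequality in $L_1(\mN)$ and a uniform-in-$i$ bound, which is legitimate because $e\le\zeta$ turns the operator sum into an absolutely convergent $L_1(\mN)$ sum of compactly-supported pieces.

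\textbf{Main obstacle.} The genuinely hard point is the off-diagonal term $\widetilde b_{\text{off}}$ (and its remainder $b_{\text{off}}^0$): unlike $\widetilde b_{\text d}$, the pieces $\widetilde b^Q_{\text{off},n}$ are not "amalgamated" into a single positive-type object, the projections $p_Q$ and $q_Q$ do not commute pleasantly with the kernel, and one cannot simply take absolute values inside $L_1(\mN)$. Controlling $\sup_i\norm{eS_i(\widetilde b_{\text{off}})e}_\infty$ requires exploiting the almost-orthogonality across scales $n$ (Lemma \ref{properties of pn, qn}) together with the fact that $e\le q$ annihilates the "$q_Q$ side" at scales where it matters, and then feeding in the $L_2$-mean H\"ormander bound \eqref{L2 mean Hormander condition} through a Cauchy–Schwarz argument in $x$ over the annulus $2^mR\le|x-y|\le2^{m+1}R$ — this is exactly where the $L_2$ (rather than $L_1$) hypothesis is used and where the convolution smoothing by $h_{r_n}$ is indispensable, since it is what allows the $v$-average in $H_2(m)$ to be invoked. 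Getting the scale bookkeeping ($j\ge i$, $j$ versus $n$, $m=j-n$ roughly) to produce a convergent double sum $\sum_m H_2(m)\cdot(\text{summable in }n)$, uniformly in $i$, is the technical heart of the argument.
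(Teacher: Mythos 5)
Your high-level strategy matches the paper: decompose $f=g+b$ with the convolution-smoothed Calder\'on--Zygmund decomposition of Theorem~\ref{nc CZ decomposition with convolution}, handle $g$ via interpolation and the assumed strong $(p_0,p_0)$ bound, and handle the four bad pieces via the $\zeta$-restriction, the support separation of Lemma~\ref{properties of nc CZ decomposition with convolution}, and the $L_2$-mean H\"ormander condition.  (For the record, the paper's projection is $e=e_1\wedge e_2$ with $e_1=\chi_{(0,\lambda]}(a)$ for a majorant $a$ of $S_ig$, and $e_2=\zeta\wedge\eta$ with $\eta$ a spectral projection of $\sum_j|\zeta\mathcal T_j b\zeta|$ coming from a key $L_1$-estimate $\sum_j\|\zeta\mathcal T_j b\zeta\|_{L_1(\mN)}\lesssim\|f\|_{L_1(\mN)}$; it never inserts $q$ itself into $e$, though doing so is harmless for the trace bound.)  Two concrete points in your sketch are, however, wrong, and both are load-bearing.

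First, for the remainder parts $b_{\text d}^0$ and $b_{\text{off}}^0$ you propose to use only the size condition \eqref{Size condition} on $K_j$ for $j$ with $2^j\gtrsim r_n$, plus the $L_1$-control of $b^Q_{\cdot,n}-b^Q_{\cdot,n}*h_{r_n}$ and the support geometry, and then $\ell^1$-sum $\sum_n\sum_Q\|b^Q_{\cdot,n}\|_1$.  This cannot close: for a fixed scale $n$ the surviving $j$ range over a half-infinite set $j\gtrsim -n$, and the size condition gives $\int_{|x-y|\approx2^j}|K_j(x,y)|\,dx\approx 1$ with no decay in $j$, so the $j$-sum diverges.  What actually produces decay is that, after writing $b^Q_{\cdot,n}-\widetilde b^Q_{\cdot,n}=\int h_{r_n}(z)\big(b^Q_{\cdot,n}(y)-b^Q_{\cdot,n}(y-z)\big)\,dz$, the operator sees the kernel difference $K_j(x,y+z)-K_j(x,y)$ averaged over $|z|\le r_n$; this is exactly what Lemma~\ref{the first lemma for bad function} estimates in terms of $H_q(n)+H_q(n+1)+2^{-n}$, and it is the $L_2$-mean H\"ormander hypothesis, not the size condition, that makes $\sum_j\|\zeta\mathcal T_j b^0_{\text d}\zeta\|_1$ converge.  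So the remainder parts need the same H\"ormander input as the convolved parts.

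Second, in your ``main obstacle'' paragraph you say that ``$e\le q$ annihilates the `$q_Q$ side' at scales where it matters.''  This is not right on two levels.  As elements of $\mM$, $q\le q_Q$, so $q$ preserves rather than kills $q_Q$; and even $q p_Q$, which does vanish \emph{pointwise} ($q(x)p_Q(x)=0$), does not help here because the singular integral pairs $e(x)$ with $p_Q$ at scales $Q$ containing $y\ne x$, and $q(x)p_Q$ is nonzero for $x\notin Q$.  The mechanism the paper actually uses for $\widetilde b_{\text{off}}$ and $b_{\text{off}}^0$ is the column-space H\"older inequality (Lemma~\ref{a useful lemma for bad function}): one splits $\int_Q p_Q(\cdot)q_Q$ into a column factor involving $p_Q f p_Q$ and a column factor $\big(\int_Q q_Qf(u)q_Q\,du\big)^{1/2}$, and the second factor is bounded in $L_\infty$ by $(\lambda|Q|)^{1/2}$ using Cuculescu's inequality $q_Qf_Qq_Q\le\lambda q_Q$ from \eqref{q_Q f_Q q_Q < lambda q_Q}; the $p_Q$-factor is then handled by Cauchy--Schwarz against the annulus and the $L_2$-mean H\"ormander bound (Lemma~\ref{the second lemma for bad function}), after which Cauchy--Schwarz over $m,Q$ against $\sum_m\phi(p_m)\le\lambda^{-1}\|f\|_1$ and $\sum_m\phi(fp_m)\le\|f\|_1$ closes the estimate.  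Without this column-space/Cuculescu step your argument has no way to produce the crucial $\lambda^{1/2}$, and the off-diagonal bound will not come out.
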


\section{The proof of Theorem \ref{Second theorem}}\label{Proof of the main theorem}
Note that every operator in a von Neumann algebra can be decomposed into a linear combination of four positive elements.
By this fact and the density of $\mN_{c,+}$ in $L_1(\mN)_+$, we may assume $f\in\mN_{c,+}$ without loss of generality.
Now, consider a fixed $f \in \mN_{c,+}$ and $\lambda > 0$.
We apply Theorem \ref{nc CZ decomposition with convolution} to decompose $f = g + b$. Then, by the quasi-triangle inequality, it suffices to find projections $e_1, e_2 \in \mN$ such that
    \begin{align}
    \label{estimate of good function}
	\sup_{i\in\Z}\norm{e_1 S_ig e_1}_{L_{\infty}(\mN)} \leq \lambda \quad\text{and}\quad \phi(e_1^{\bot})\lesssim \lambda^{-1}\norm{f}_{L_1(\mN)}, \\
	\label{estimate of bad function}
	\sup_{i\in\Z}\norm{e_2 S_ib e_2}_{L_{\infty}(\mN)} \leq \lambda \quad\text{and}\quad \phi(e_2^{\bot})\lesssim \lambda^{-1}\norm{f}_{L_1(\mN)}.
    \end{align}
Indeed, by setting $e = e_1\wedge e_2$, we obtain
    \begin{align*}
	\norm{e S_if e}_{L_{\infty}(\mN)}
	\leq \norm{e S_ig e}_{L_{\infty}(\mN)} + \norm{e S_ib e}_{L_{\infty}(\mN)}
	\leq 2\lambda
    \end{align*}
and
    \begin{align*}
	\phi(e^{\bot}) \leq \phi(e_1^{\bot}) + \phi(e_2^{\bot}) \lesssim \lambda^{-1}\norm{f}_{L_1(\mN)},
    \end{align*}
which gives Theorem \ref{Second theorem}.

\subsection{Estimate for the good function (\ref{estimate of good function}) }
For any $i\in\Z$, it follows from the definitions of $S_i$ and $T_{\epsilon}^{i_{\epsilon}}$ ((\ref{Sif(x)}) and (\ref{T epsilon,i epsilon f(x)})) that there exists $\epsilon_i > 0$ such that
     \[ S_i = S_{\epsilon_i}- T_{\epsilon_i}^i. \]
Under the assumptions of Theorem \ref{Main theorem} and Lemma \ref{reduction to lacunary sequence} $(i)$, both $(S_{\epsilon_i})_{i \in \mathbb{Z}}$ and $(T_{\epsilon_i}^i)_{i \in \mathbb{Z}}$ are of strong type $(p_0, p_0)$.
Therefore, from the above equality, $(S_i)_{i \in \mathbb{Z}}$ is also of strong type $(p_0, p_0)$.
In addition, since $g$ is positive (by Theorem \ref{nc CZ decomposition with convolution}) and the kernel $K$ is real-valued (as stated in Section \ref{Section: Two reductions}), the operator $S_i g$ is self-adjoint.
These two facts imply that for any $i\in\Z$, we can find a positive element $a\in\mN$ such that for any $i\in\Z$,
    \[
    -a \leq S_i g \leq a \quad\text{and}\quad \norm{a}_{L_{p_0}(\mN)}\lesssim \norm{g}_{L_{p_0}(\mN)}.
    \]
Taking $e_1 = \chi_{(0,\lambda]}(a)$. Then we have
    \[
    -\lambda \leq -e_1 a e_1 \leq e_1 S_i g e_1 \leq e_1 a e _1 \leq \lambda.
    \]
Finally, applying the Chebyshev inequality, the H\"older inequality, along with the boundedness of $g$ in Lemma \ref{properties of nc CZ decomposition with convolution} $(i)$, we obtain
     \begin{align*}
	 \phi(e_1^{\bot})
     \leq \lambda^{-p_0} \norm{a}_{L_{p_0}(\mN)}^{p_0}
     \leq \lambda^{-p_0} \norm{g}_{L_{\infty}(\mN)}^{p_0-1} \norm{g}_{L_1(\mN)}
     \lesssim \lambda^{-1} \norm{f}_{L_1(\mN)},
     \end{align*}
which yields (\ref{estimate of good function}).

\subsection{Estimate for the bad function (\ref{estimate of bad function})}

We now give an estimate for the bad function $b$.
First, we decompose $S_i b$ as
    \[
    S_i b = \zeta^{\bot}S_i b\zeta^{\bot} + \zeta S_i b\zeta^{\bot} + \zeta^{\bot}S_i b\zeta + \zeta S_i b\zeta,
    \]
where the projection $\zeta$ is defined as in (\ref{projection zeta_s}), and we omit its subscript as stated in Remark \ref{the value of s}.

For each $j\in\Z$, define the operator $\mathcal{T}_j$ by
    \[
    \mathcal{T}_j f(x)= \int_{\R^d} K_j(x,y) f(y) \dif y.
    \]
Then, $S_i$ admits a representation $S_i = \sum\limits_{j\geq i} \mathcal{T}_j$.
The key step is to establish the following $L_1$-estimate:
    \begin{align} \label{zeta Tj b zeta}
	\sum_{j\in\Z} \norm{\zeta \mathcal{T}_j b \zeta}_{L_1(\mN)} \lesssim \norm{f}_{L_1(\mN)}.
    \end{align}
We claim that (\ref{zeta Tj b zeta}) holds, and use it to deduce (\ref{estimate of bad function}).
The proof of (\ref{zeta Tj b zeta}) itself will be given later.

Define the projection $\eta = \chi_{(0,\lambda]}\Big(\sum\limits_{j\in\Z} |\zeta \mathcal{T}_j b \zeta| \Big)$.
It then follows that
    \begin{align*}
	-\lambda
	\leq -\eta \sum_{j\in\Z} |\zeta \mathcal{T}_j b \zeta| \eta
	\leq \eta \zeta S_i b \zeta \eta
	\leq \eta \sum_{j\in\Z} |\zeta \mathcal{T}_j b \zeta| \eta
	\leq \lambda.
     \end{align*}
Moreover, by the Chebyshev inequality and (\ref{zeta Tj b zeta}),
    \begin{align*}
	\phi(\eta^{\perp}) \leq \lambda^{-1} \sum_{j\in\Z} \norm{\zeta \mathcal{T}_j b\zeta }_{L_1(\mN)}
	\lesssim \lambda^{-1} \norm{f}_{L_1(\mN)}.
    \end{align*}
Now, let $e_2 = \zeta \wedge\eta$.
Combining these results with Lemma \ref{boundedness of 1-zeta}, we obtain
    \begin{align*}
	\norm{e_2 S_i b e_2}_{L_{\infty}(\mN)}
	\leq \norm{\eta \zeta S_i b \zeta \eta}_{L_{\infty}(\mN)}
	\leq \lambda
    \end{align*}
    and
    \begin{align*}
	\phi(e_2)
	\leq \phi(\zeta^{\perp}) + \phi(\eta^{\perp})
	\lesssim \lambda^{-1} \norm{f}_{L_1(\mN)},
    \end{align*}
which leads to (\ref{estimate of bad function}).

It remains to prove (\ref{zeta Tj b zeta}).
Recall from Theorem \ref{nc CZ decomposition with convolution} that the bad function $b$ has the decomposition:
    \[
    b = \widetilde{b}_{\text{d}} + b_{\text{d}}^0 + \widetilde{b}_{\text{off}} + b_{\text{off}}^0.
    \]
Therefore, by the triangle inequality, it suffices to verify the following two lemmas:
\begin{lemma}\label{Diagonal part zeta Tj bd zeta}
$(i)$ $\sum\limits_{j\in\Z} \big\|\zeta \mathcal{T}_j \widetilde{b}_{\text{d}} \zeta \big\|_{L_1(\mN)} \lesssim \norm{f}_{L_1(\mN)}$;
$(ii)$ $\sum\limits_{j\in\Z} \big\|\zeta \mathcal{T}_j b^0_{\text{d}} \zeta \big\|_{L_1(\mN)} \lesssim \norm{f}_{L_1(\mN)}$.
\end{lemma}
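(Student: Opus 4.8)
### Proof proposal for Lemma \ref{Diagonal part zeta Tj bd zeta}

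\textbf{Overall strategy.}
The plan is to treat the two summands $\widetilde b_{\text{d}} = \sum_{n\geq 1}\widetilde b_{\text{d},n}$ and $b^0_{\text{d}} = \sum_{n\geq 1} b^0_{\text{d},n}$ by exploiting, in each case, the support/cancellation properties recorded in Lemma \ref{properties of nc CZ decomposition with convolution} (ii)--(iii) together with the $L_2$-mean H\"ormander condition \eqref{L2 mean Hormander condition}. For a fixed frequency block $j$ the kernel $K_j(x,y)=K(x,y)\Phi_j(x-y)$ is supported where $2^{j-1}\sqrt d\le |x-y|\le 2^{j+1}\sqrt d$, so $\zeta\mathcal T_j \widetilde b_{\text{d},n}\zeta$ vanishes unless the ``annulus scale'' $2^j$ is compatible with the ``cube scale'' $r_n=2^{-n-1}\sqrt d$; more precisely, Lemma \ref{properties of nc CZ decomposition with convolution}(ii) forces $\zeta(x)\widetilde b_{\text{d},n}(y)\zeta(x)=0$ whenever $|x-y|\le (\frac{2s+1}{\sqrt d}-2)r_n$, and with $s=[6\sqrt d]$ this lower radius exceeds $2^{j+1}\sqrt d$ as soon as $n$ is larger than $j$ by a fixed constant $c_0=c_0(d)$. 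Hence the double sum collapses to $\sum_j \sum_{1\le n\le j+c_0}$, and I would reorganize it as $\sum_{n\geq 1}\sum_{j\ge n-c_0}\|\zeta\mathcal T_j\widetilde b_{\text{d},n}\zeta\|_{L_1(\mN)}$, reducing everything to a good per-$n$ bound with a summable-in-$(j-n)$ gain.

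\textbf{The per-block estimate for $\widetilde b_{\text{d}}$.}
Fix $n\ge 1$ and $j\ge n-c_0$. Since $\widetilde b_{\text{d},n}=\sum_{Q\in\mQ_n}\widetilde b_{\text{d},n}^Q$ with the pieces supported (as operator-valued functions) in a fixed dilate of $Q$ and, crucially, satisfying the cancellation $\int_{\R^d}\widetilde b_{\text{d},n}^Q=0$, I would write, for each $Q$,
\[
\mathcal T_j \widetilde b_{\text{d},n}^Q(x) = \int_{\R^d}\big(K_j(x,y)-K_j(x,c(Q))\big)\,\widetilde b_{\text{d},n}^Q(y)\,\dif y,
\]
and estimate the $L_1(\mN)$ norm of $\zeta\mathcal T_j\widetilde b_{\text{d},n}^Q\zeta$ by Cauchy--Schwarz in the $x$-integral over the relevant annulus of measure $\sim (2^j)^d$, bringing in $\big(\int_{2^j\text{-annulus}}|K_j(x,y)-K_j(x,c(Q))|^2\dif x\big)^{1/2}$. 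Averaging the inner $y,c(Q)$ difference over $y$ in (a dilate of) $Q$ and over the displacement $v=y-c(Q)$, $|v|\lesssim r_n\sim 2^{-n}$, against the normalized measure $|B_{r_n}|^{-1}\dif v$ — which is exactly the averaging built into $H_2(m)$ with $m\approx j-n$ and $R\approx 2^{-n}$ — produces the factor $H_2(j-n+O(1))$ times $\|\widetilde b_{\text{d},n}^Q\|_{L_1(\mN)}\lesssim \|b_{\text{d},n}^Q\|_{L_1(\mN)}$ (Young's inequality for the convolution with $h_{r_n}$). Summing over $Q\in\mQ_n$ uses $\sum_{n}\sum_{Q}\|b_{\text{d},n}^Q\|_{L_1(\mN)}\le 2\|f\|_{L_1(\mN)}$ from Lemma \ref{properties of nc CZ decomposition with convolution}(ii); summing over $j$ uses $\sum_{m\ge 1}H_2(m)<\infty$ from \eqref{L2 mean Hormander condition}; and the size bound \eqref{Size condition} handles the finitely many ``exceptional'' values $j< n$ (equivalently $m\le O(1)$) where $H_2$ is unavailable, since there $\|\zeta\mathcal T_j\widetilde b_{\text{d},n}^Q\zeta\|_{L_1}\lesssim \|b_{\text{d},n}^Q\|_{L_1}$ directly. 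This proves (i).

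\textbf{The estimate for $b^0_{\text{d}}$.}
Here there is no cancellation, but $b^0_{\text{d},n}^Q = b_{\text{d},n}^Q-\widetilde b_{\text{d},n}^Q = b_{\text{d},n}^Q * (\delta_0 - h_{r_n})$, and Lemma \ref{properties of nc CZ decomposition with convolution}(iii) still gives $\zeta(x) b^0_{\text{d},n}(y)\zeta(x)=0$ for $|x-y|\le(\frac{2s+1}{\sqrt d}-2)r_n$, so the same collapse of the $(j,n)$ sum to $j\ge n-c_0$ applies. For $j$ comparably small ($m=j-n=O(1)$) I would just use $\|b^0_{\text{d},n}^Q\|_{L_1}\lesssim\|b_{\text{d},n}^Q\|_{L_1}$ and the size condition. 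For $j\ge n+C$ the trick is to move the mollifier onto the kernel: $\mathcal T_j b^0_{\text{d},n}^Q(x) = \int (K_j(x,y) - (K_j(x,\cdot)*h_{r_n})(y))\, b_{\text{d},n}^Q(y)\,\dif y$, i.e. the relevant quantity is the difference of $K_j(x,y)$ and its average over a ball of radius $r_n\sim 2^{-n}$ around $y$; this is again controlled, after Cauchy--Schwarz over the $2^j$-annulus in $x$ and Fubini in the averaging variable, by $H_2(j-n+O(1))$ times $\|b_{\text{d},n}^Q\|_{L_1(\mN)}$. Then sum over $Q$, $n$, $j$ exactly as before.

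\textbf{Main obstacle.}
The routine parts are Young's inequality and the dyadic bookkeeping of scales; the delicate point is verifying cleanly that the spatial geometry — the support of $\widetilde b_{\text{d},n}^Q$ in a dilate of $Q$, the restriction of $x$ to the $j$-th annulus around $y$, and the displacement $v$ of size $\lesssim r_n$ — lines up so that the estimate is genuinely controlled by $H_2(m)$ with $m = j-n+O(1)$ \emph{and} with the correct normalizing power $(2^mR)^{d(q-1)}$, $R\sim 2^{-n}$, $q=2$, rather than by $\delta_2(m)$ (which would need a pointwise-in-$v$ bound the hypotheses do not supply). Getting the averaging over $v$ to match the $|B_R|^{-1}\int_{|v|\le R}$ in the definition of $H_2(m)$, and checking that the ``lost'' region $m\le O(1)$ is handled by \eqref{Size condition} with an absolute constant, is where the argument must be carried out carefully; everything else is summation of a convergent geometric-type series against $\sum_m H_2(m)<\infty$.
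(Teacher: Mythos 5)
Your overall strategy is the same as the paper's: use the support property to restrict the $(j,n)$ sum, exploit the cancellation $\int\widetilde b_{\text{d},n}^Q=0$ to insert the difference $K_j(x,y)-K_j(x,c(Q))$ for part $(i)$, and move the mollifier onto the kernel (writing $\mathcal T_j b^{0,Q}_{\text{d},n}$ with $K_j(x,y)-K_j(x,y+z)$, $z\in\supp h_{r_n}$) for part $(ii)$; then average over the displacement to recognize $H_2$ and sum using $\sum_m H_2(m)<\infty$ and $\sum_n\sum_Q\|b_{\text{d},n}^Q\|_{L_1}\lesssim\|f\|_{L_1}$. Your preference for applying Cauchy--Schwarz in $x$ directly to land on the $L_2$-in-$x$ kernel integral is a cosmetic variant of what the paper does (it uses Lemma \ref{the first lemma for bad function} with $q=1$ and then the inequality $H_1(m)\lesssim_d H_2(m)$, which is the same Cauchy--Schwarz packaged differently).

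There is, however, a systematic sign error in your scale bookkeeping. With $r_n=2^{-n-1}\sqrt d$ and $\supp K_j(x,\cdot)\subset\{2^{j-1}\sqrt d\le|x-y|\le 2^{j+1}\sqrt d\}$, the support property forces $\zeta\mathcal T_j\widetilde b_{\text{d},n}\zeta=0$ precisely when $(\tfrac{2s+1}{\sqrt d}-2)r_n>2^{j+1}\sqrt d$, i.e.\ when $n+j<c_0$ for a fixed $c_0=c_0(d)$, \emph{not} when ``$n$ is larger than $j$ by a fixed constant''. Consequently the double sum collapses to $\sum_{n\ge 1}\sum_{j\ge c_0-n}$, not to $\sum_j\sum_{1\le n\le j+c_0}$; your stated region excludes infinitely many non-vanishing terms (e.g.\ large $n$ with fixed $j\ge 1$). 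Likewise, matching $2^mR$ to the annulus scale $2^j$ with $R\sim 2^{-n}$ gives $m\approx j+n$, not $j-n$. Once the indices are corrected, the terms you flag as ``exceptional, $m\le O(1)$, where $H_2$ is unavailable'' simply do not occur: every non-vanishing block has $m=n+j\ge c_0\ge 2$, exactly as in the paper's equation \eqref{a claim to reduce the sum}, so no separate size-condition argument is needed. With that fix your argument coincides with the paper's proof; as written, the indexing is inconsistent and the proof would not close.
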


\begin{lemma}\label{Off-diagonal part zeta Tj boff zeta}
$(i)$ $\sum\limits_{j\in\Z} \big\| \zeta \mathcal{T}_j \widetilde{b}_{\text{off}} \zeta \big\|_{L_1(\mN)} \lesssim \norm{f}_{L_1(\mN)}$;
$(ii)$ $\sum\limits_{j\in\Z} \big\| \zeta \mathcal{T}_j b_{\text{off}}^0 \zeta \big\|_{L_1(\mN)} \lesssim \norm{f}_{L_1(\mN)}$.
\end{lemma}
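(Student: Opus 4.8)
The plan is to treat each of the four pieces $\widetilde{b}_{\text{d}}$, $b_{\text{d}}^0$, $\widetilde{b}_{\text{off}}$, $b_{\text{off}}^0$ separately, exploiting the structure provided by Lemma \ref{properties of nc CZ decomposition with convolution}: a support condition of the form $\zeta(x)\,(\cdot)(y)\,\zeta(x)=0$ whenever $|x-y|\le c_d r_n$ (with $c_d=\frac{2s+1}{\sqrt d}-2$, which is positive by the choice $s=[6\sqrt d]$), plus a cancellation condition $\int \widetilde{b}_{\cdot,n}^Q=0$ for the two convolution parts. Because $\mathcal{T}_j$ has kernel $K_j$ supported where $2^{j-1}\sqrt d\le |x-y|\le 2^{j+1}\sqrt d$, composing $\zeta\mathcal{T}_j(\cdot)_n\zeta$ will kill all pairs $(n,j)$ for which $2^{j+1}\sqrt d\le c_d r_n$, i.e. it localizes the double sum to $j\gtrsim_d -n$, equivalently to a band $j+n\ge -C_d$. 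The first step, therefore, is to reorganize $\sum_j \|\zeta\mathcal{T}_j(\cdot)\zeta\|_{L_1}$ as $\sum_{n\ge 1}\sum_{j:\,j+n\ge -C_d}\|\zeta\mathcal{T}_j(\cdot)_n\zeta\|_{L_1}$ and, for fixed $n$, set $m=j+n$ so that the sum runs over $m\ge -C_d$; the gain factor coming from the size/smoothness of the kernel at scale $2^j=2^{m-n}$ relative to the scale $r_n\sim 2^{-n}$ of the cube will depend only on $m$, which is exactly what makes the $H_2$-sum $\sum_m H_2(m)<\infty$ enter.

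For the remainder parts $b_{\text{d}}^0$ and $b_{\text{off}}^0$ (Lemma \ref{Diagonal part zeta Tj bd zeta}(ii) and Lemma \ref{Off-diagonal part zeta Tj boff zeta}(ii)) I would not use cancellation at all. Instead, since $b_{\text{d},n}^0 = \sum_Q(b_{\text{d},n}^Q-\widetilde b_{\text{d},n}^Q)$ and $\widetilde b_{\text{d},n}^Q=b_{\text{d},n}^Q*h_{r_n}$, the contribution is controlled by an $L_1$-bound: $\|b_{\text{d},n}^0\|_{L_1(\mN)}\le 2\sum_Q\|b_{\text{d},n}^Q\|_{L_1(\mN)}$, and by Lemma \ref{properties of nc CZ decomposition with convolution}(ii) the latter sums over $n$ to $\le 4\|f\|_{L_1(\mN)}$. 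Combined with the operator-norm size bound $\|\zeta\mathcal{T}_j\|_{L_1\to L_1}\lesssim 1$ (from \eqref{Size condition}: $\int_{\R^d}|K_j(x,y)|\,dx\lesssim 1$ uniformly in $y$) together with the localization $j+n\ge -C_d$, one still needs a summable-in-$j$ gain. Here the point is that the \emph{difference} $b_{\text{d},n}^Q-b_{\text{d},n}^Q*h_{r_n}$ is itself small at scales $2^j\gg r_n$ in the sense measured by $K_j$: one writes $\mathcal{T}_j(b_{\text{d},n}^Q-b_{\text{d},n}^Q*h_{r_n})(x)=\int\int (K_j(x,y)-K_j(x,y+v))h_{r_n}(v)\,b_{\text{d},n}^Q(y)\,dv\,dy$, and the $\dif x$-integral of $|K_j(x,y)-K_j(x,y+v)|$ over the annulus $|x-y|\sim 2^j$, with $|v|\le r_n$, is exactly what $H_2(m)$ (via Cauchy--Schwarz on the annulus, turning the $L_2$-mean quantity into an $L_1$-average over $|v|\le r_n$) bounds — this is where passing from an $L_1$ to an $L_2$-mean condition forces the use of Cauchy--Schwarz and the volume normalization $|B_{r_n}|$ built into $H_2(m)$.

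For the convolution parts $\widetilde b_{\text{d}}$ and $\widetilde b_{\text{off}}$ (the (i) statements) I would instead use the cancellation $\int\widetilde b_{\text{d},n}^Q=0$ to write $\zeta\mathcal{T}_j\widetilde b_{\text{d},n}^Q\zeta(x)=\int_Q (K_j(x,y)-K_j(x,c(Q)))\,\widetilde b_{\text{d},n}^Q(y)\,\zeta(x)\cdots\,dy$ — more precisely, subtract the value at the cube center $c(Q)$ — and then integrate in $x$ over the annulus $2^{j-1}\sqrt d\le|x-\cdot|\le 2^{j+1}\sqrt d$ where $\Phi_j$ lives. Since $\widetilde b_{\text{d},n}^Q$ lives essentially on $(1+\tfrac1{\sqrt d})Q$ (a neighborhood of $Q$ of comparable size to $r_n$), the displacement $|y-c(Q)|\lesssim r_n$ is $\ll 2^j$ in the relevant band, and Cauchy--Schwarz on the annulus converts $\int_{\text{annulus}}|K(x,y)-K(x,c(Q))|\,dx$ into $\big((2^j)^d\big)^{1/2}\big(\int_{\text{annulus}}|K(x,y)-K(x,c(Q))|^2dx\big)^{1/2}$, which is $\lesssim H_2(m)$ after averaging the center-displacement over a ball of radius $\sim r_n$ — again using the defining average in $H_q(m)$. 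Summing: $\sum_j\|\zeta\mathcal{T}_j\widetilde b_{\text{d},n}\zeta\|_{L_1}\lesssim \big(\sum_{m\ge -C_d}H_2([m]_+)\big)\sum_Q\|b_{\text{d},n}^Q\|_{L_1(\mN)}$ (noting $\|\widetilde b_{\text{d},n}^Q\|_{L_1}\le\|b_{\text{d},n}^Q\|_{L_1}$ since $h_{r_n}$ is a probability density), and then summing over $n$ and invoking Lemma \ref{properties of nc CZ decomposition with convolution}(ii) closes the estimate. The off-diagonal statements (Lemma \ref{Off-diagonal part zeta Tj boff zeta}) are handled identically: the only structural change is that $b_{\text{off},n}^Q$ carries a $q_Q$ factor on one side, but the $L_1$-bound $\sum_n\sum_Q\|b_{\text{off},n}^Q\|_{L_1(\mN)}\lesssim\|f\|_{L_1(\mN)}$ holds by the same argument as in the proof of Lemma \ref{properties of nc CZ decomposition with convolution}(ii) (using $\|q_Q\|\le 1$), so the scheme transfers verbatim.

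The main obstacle is the passage, in both the cancellation argument and the remainder argument, from the pointwise/annular kernel differences to the $L_2$-mean H\"ormander quantity $H_2(m)$: one cannot directly estimate $\int|K(x,y)-K(x,c(Q))|\,dx$ over the annulus, since only the $L_2$-mean (and moreover the mean over $|v|\le r_n$ of the center displacement) is assumed finite. The resolution is a two-fold Cauchy--Schwarz — once to introduce the $L_2$ norm over the annulus (paying a factor $|{\rm annulus}|^{1/2}\sim (2^mr_n)^{d/2}$, which is absorbed by the weight $(2^mr_n)^{d(q-1)/q}$ with $q=2$ in the definition of $H_2(m)$), and once (or a direct averaging) to replace the single displacement $y\mapsto c(Q)$ by the average over a ball of radius $\sim r_n$, matching the $\frac1{|B_R|}\int_{|v|\le R}$ in $H_2(m)$. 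One must also check that the "band" localization $j+n\ge -C_d$ survives after convolution with $h_{r_n}$ (it does, since $\supp h_{r_n}\subset B_{r_n}$ only enlarges supports by $r_n$, which is accounted for in the constant $c_d$ of Lemma \ref{properties of nc CZ decomposition with convolution}), and that the interchange of the $x$-integral with the sum over $Q\in\mQ_n$ is legitimate, which follows from the disjointness of the cubes in $\mQ_n$ together with the finite overlap of their $\sim 2^j$-neighborhoods for fixed $j$. Once these points are in place, all four sums reduce to $\big(\sum_{m}H_2(m)\big)\cdot\|f\|_{L_1(\mN)}$, which is finite by \eqref{L2 mean Hormander condition}.
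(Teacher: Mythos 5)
Your plan correctly identifies the band localization in $(n,j)$, the use of cancellation for the $\widetilde{b}$-parts, and the Cauchy--Schwarz passage from annular $L_1$-differences of the kernel to the $L_2$-mean quantity $H_2(m)$ — all of which match the paper's treatment of the \emph{diagonal} lemma. However, the claim that the off-diagonal lemma is ``handled identically'' and that ``the scheme transfers verbatim'' hides a genuine gap, and that gap is precisely where the paper's argument differs in an essential way.

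Specifically, the bound
\[
\sum_{n\ge 1}\sum_{Q\in\mQ_n}\big\|b_{\text{off},n}^Q\big\|_{L_1(\mN)}\lesssim \|f\|_{L_1(\mN)}
\]
does \emph{not} follow ``by the same argument'' as Lemma \ref{properties of nc CZ decomposition with convolution}(ii) with the observation $\|q_Q\|_\infty\le 1$. In the diagonal case the chain of equalities
$\|p_Qfp_Q\chi_Q\|_{L_1}=\phi(p_Qfp_Q\chi_Q)=\phi(fp_Q\chi_Q)$
relies entirely on the positivity of $p_Qfp_Q$, and it is the tracial identity (not merely a bound) that makes the sum telescope to $\phi(f)$. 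For the off-diagonal block $p_Q f q_Q\chi_Q$ the operator is not self-adjoint, and $\|p_Q f q_Q\|_{L_1(\mM)}=\tau(|p_Q f q_Q|)$ has no comparable identity; using $\|q_Q\|\le 1$ only gives $\|p_Qfq_Q\|_1\le \|p_Qf\|_1$, and $\|p_Qf\|_1$ is not controlled by $\tau(fp_Q)$ (already false for $2\times 2$ matrices). Consequently the quantity $\sum_{n,Q}\|b_{\text{off},n}^Q\|_{L_1(\mN)}$ cannot be summed against $\sum_m H_2(m)$ in the way your plan requires.

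What the paper actually does for the off-diagonal parts is structurally different: it never attempts an $L_1$ bound on $b_{\text{off},n}^Q$. Instead, after subtracting $K_j(x,c(Q))$ (for $\widetilde b_{\text{off}}$) or inserting the difference $K_j(x,y)-K_j(x,y+z)$ against $h_{r_{n-j}}$ (for $b_{\text{off}}^0$), each term is written as a product of a $p_Q$-side factor and a $q_Q$-side factor and estimated by the Hölder inequality in column spaces (Lemma \ref{a useful lemma for bad function}). The $q_Q$-side is bounded in $L_\infty$ by $(|Q|\lambda)^{1/2}$ using the Cuculescu estimate $q_Qf_Qq_Q\le\lambda q_Q$ from \eqref{q_Q f_Q q_Q < lambda q_Q}; the $p_Q$-side is estimated in $L_1$ via the square-function inequality of Lemma \ref{the second lemma for bad function}, producing $\big(H_2^2(m)\,\tau(p_Q)\,\phi(fp_Q\chi_Q)\big)^{1/2}$. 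The resulting sum $\sum_{m}\sum_{Q\in\mQ_m}\big(\tau(p_Q)\,\phi(fp_Q\chi_Q)\,|Q|\lambda\big)^{1/2}$ is then closed by two applications of Cauchy--Schwarz, giving $\big(\lambda\phi(1-q)\big)^{1/2}\big(\phi(f(1-q))\big)^{1/2}\lesssim\|f\|_{L_1(\mN)}$. Note that here the $L_2$-mean H\"ormander condition enters intrinsically (the kernel difference must be squared before integrating in $x$ to make the column-space Hölder work), whereas in the diagonal case one only uses $H_1\lesssim_d H_2$. Your proposal would need to incorporate this factorization step — the size bound $q_Qf_Qq_Q\le\lambda q_Q$ on the $q_Q$-side together with the column-space Hölder and the $L_{1/2}$ square-function estimate of Lemma \ref{the second lemma for bad function} — to close the off-diagonal case.
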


We first introduce a lemma that will be essential for proving Lemma \ref{Diagonal part zeta Tj bd zeta} and Lemma \ref{Off-diagonal part zeta Tj boff zeta}.
For a fixed dyadic cube $Q$, we recall the kernel average
    \[
    \mathcal{K}_Q(x,y) = \frac{1}{|Q|}\int_{|v|\leq \frac{\sqrt{d}}{2}\ell(Q)} |K(x,y+v)-K(x,y)| \dif v.
    \]
When there is no risk of confusion, it is convenient to denote by $\mathcal{K}_{j,Q}$ the corresponding average for the truncated kernel $K_j$:
    \[
    \mathcal{K}_{j,Q}(x,y) := \frac{1}{|Q|}\int_{|z|\leq \frac{\sqrt{d}}{2} \ell(Q)} |K_j(x,y+z)-K_j(x,y)| \dif z .
    \]

\begin{lemma} \label{the first lemma for bad function}
Fix a dyadic cube $Q\in \mQ_{n-j}$ and $j\in\Z$.
Then, for $n\geq 3$,	
		\begin{align*}
	    \int_{\R^d}  \sup_{y\in Q} \mathcal{K}_{j,Q}(x,y) \dif x
        \lesssim  \mathcal{H}(n-2) + \mathcal{H}(n-1) + \mathcal{H}(n) + \mathcal{H}(n+1) +  2^{-n}.
		\end{align*}  	
\end{lemma}

\begin{proof}
From the definition of $K_j$ in (\ref{Kj}), we have
   	\begin{align*}
   	\mathcal{K}_{j,Q}(x,y)
   	&\leq \frac{1}{|Q|} \Big( \int_{|z|\leq \frac{\sqrt{d}}{2}\ell(Q)} |K(x,y+z) - K(x,y)| |\Phi_j(x-y)|  \dif z \\
   	&\quad + \int_{|z|\leq \frac{\sqrt{d}}{2}\ell(Q) } |K(x,y+z)| |\Phi_j(x-(y+z))-\Phi_j(x-y)|  \dif z \Big).
   	\end{align*}
Consequently,    	
    \begin{align*}
    \int_{\R^d}  \sup_{y\in Q} \mathcal{K}_{j,Q}(x,y)\dif x
    \leq A_1 + A_2,
    \end{align*}
where
    \begin{align*}
    A_1
    &:= \frac{1}{|Q|} \int_{\R^d} \sup_{y\in Q} \Big( \int_{|z|\leq \frac{\sqrt{d}}{2}\ell(Q)} |K(x,y+z) - K(x,y)| \dif z |\Phi_j(x-y)| \Big)\dif x \\
    &= \int_{\R^d} \sup_{y\in Q} \Big(\mathcal{K}_{Q}(x,y) |\Phi_j(x-y)| \Big) \dif x,
    \end{align*}
and
    \begin{align*}
    A_2:=  \frac{1}{|Q|} \int_{\R^d} \Big(\sup_{y\in Q} \int_{|z|\leq \frac{\sqrt{d}}{2}\ell(Q) } |K(x,y+z)| |\Phi_j(x-(y+z))-\Phi_j(x-y)|  \dif z \Big) \dif x.
    \end{align*}

We first estimate $A_1$.
When $y\in Q$, by $\supp(\Phi_j) \subset\big\{x\in\R^d:2^{j-1}\sqrt{d} \leq |x|\leq 2^{j+1}\sqrt{d} \big\}$ and
    \begin{align} \label{the range of z}
    \frac{\sqrt{d}}{2}\ell(Q) = 2^{-n+j-1}\sqrt{d} \leq 2^{j-4}\sqrt{d}, \quad \text{when}\ n\geq 3,
    \end{align}
 we obtain
    \begin{align*}
    |x-c(Q)| \leq |x-y|+|y-c(Q)| \leq 2^{j+1}\sqrt{d} + (\sqrt{d}/2)\ell(Q) \leq 2^{j+2}\sqrt{d}, \\
    |x-c(Q)| \geq |x-y|-|y-c(Q)| \leq 2^{j-1}\sqrt{d} - (\sqrt{d}/2)\ell(Q) \leq 2^{j-2}\sqrt{d}.
    \end{align*}
This means,
    \begin{align*}
    2^{n-2}\sqrt{d}\ell(Q) \leq |x-c(Q)| \leq 2^{n+2}\sqrt{d}\ell(Q).	
    \end{align*}
Hence,
    \begin{align*}
	A_1
	&\leq \int_{2^{n-2}\sqrt{d}\ell(Q)\leq|x-c(Q)|\leq 2^{n+2}\sqrt{d}\ell(Q)} \sup_{y\in Q} \mathcal{K}(x,y) \dif x\\
	&= \sum_{k=-2}^1 \int_{2^{n+k}\sqrt{d}\ell(Q)\leq|x-c(Q)|\leq 2^{n+k+1}\sqrt{d}\ell(Q)} \sup_{y\in Q} \mathcal{K}(x,y) \dif x \\
	&\leq \mathcal{H}(n-2) + \mathcal{H}(n-1) + \mathcal{H}(n) + \mathcal{H}(n+1).
    \end{align*}

Consider $A_2$.
We first claim that there exists a constant $C_{d}$ such that the function $x\mapsto \big(\Phi_j(x-(y+z))-\Phi_j(x-y)\big)$ is supported in the set
    \begin{align}\label{support E(j,Q)}
	 \big\{x\in\R^d: |x-c(Q)| \leq C_{d}2^{j}\big\}
	 =:E_{j,Q},
    \end{align}
whenever $y\in Q$ and $z\in B_{\frac{\sqrt{d}}{2}\ell(Q)}$.
To prove this, by the support of $\Phi_j$, when $\Phi_j(x-(y+z))-\Phi_j(x-y) \neq 0$, at least one of the following conditions holds:
    \begin{enumerate}[$(i)$]
	\item $2^{j-1}\sqrt{d} \leq |x-(y+z)| \leq 2^{j+1}\sqrt{d}$,
	\item $2^{j-1}\sqrt{d} \leq |x-y| \leq 2^{j+1}\sqrt{d}$.
    \end{enumerate}
In case $(i)$, it follows from $|z|\leq \frac{\sqrt{d}}{2}\ell(Q)$, $y\in Q$ and (\ref{the range of z}) that    \begin{align*}
    |x-c(Q)| \leq |x-(y+z)| + |z| + |y-c(Q)| \leq 2^{j+1}\sqrt{d} +   2^{j-4}\sqrt{d} + 2^{j-4}\sqrt{d}.
    \end{align*}
In case $(ii)$, similarly,
    \begin{align*}
    |x-c(Q)| \leq |x-y| + |y-c(Q)| \leq 2^{j+1}\sqrt{d} +   2^{j-4}\sqrt{d}.
    \end{align*}
Thus (\ref{support E(j,Q)}) holds.

Moreover, when $\Phi_j(x-(y+z))-\Phi_j(x-y) \neq 0$, we have another useful estimate:
    \begin{align} \label{|x-(y+z)|=2j}
	|x-(y+z)| \approx 2^j.
    \end{align}
Indeed, this is immediate in case $(i)$.
And in case $(ii)$, using $|z|\leq \frac{\sqrt{d}}{2}\ell(Q)$ and (\ref{the range of z}) again, we have
   \begin{align*}
   |x-(y+z)| \leq |x-y| + |z| \leq 2^{j+1}\sqrt{d} + 2^{j-4}\sqrt{d}, \\
   |x-(y+z)| \geq |x-y| - |z| \geq 2^{j-1}\sqrt{d} - 2^{j-4}\sqrt{d},
   \end{align*}
which also implies $|x-(y+z)|\approx 2^{j}$, leading to (\ref{|x-(y+z)|=2j}).

Now, we can bound $A_2$.
By the size condition (\ref{Size condition}), the mean-value theorem, the bound $|z|\leq \frac{\sqrt{d}}{2}\ell(Q) = 2^{-n+j-1}\sqrt{d}$, together with (\ref{support E(j,Q)}) and (\ref{|x-(y+z)|=2j}),
    \begin{align*}
    A_2	
    &\lesssim \int_{E_{j,Q}} \Big(\sup_{y\in Q} \frac{1}{|Q|} \int_{|z|\leq \frac{\sqrt{d}}{2}\ell(Q)}  \frac{ |z|2^{-j}}{|x-(y+z)|^d} \dif z \Big) \dif x \\
    &\lesssim 2^{-n-jd}\int_{E_{j,Q}} \Big(\sup_{y\in Q} \frac{1}{|Q|} \int_{|z|\leq \frac{\sqrt{d}}{2}\ell(Q)} 1 \dif z \Big) \dif x \\
    &\lesssim 2^{-n}.
    \end{align*}
This completes the proof.
\end{proof}

\subsubsection{The proof of Lemma \ref{Diagonal part zeta Tj bd zeta}}

$(i)$
We first observe that, for any $j\in\Z$,
    \begin{align} \label{a claim to reduce the sum}
	\zeta \mathcal{T}_j \widetilde{b}_{\text{d},n-j}\zeta
	= \zeta \mathcal{T}_j b_{\text{d},n-j}^0\zeta = 0, \quad \text{when} \ n \leq 3.
    \end{align}
Indeed, for a fixed $x\in\R^d$, Lemma \ref{properties of nc CZ decomposition with convolution} $(ii)$ and Remark \ref{the value of s} ($s = [50\sqrt{d}]$) implies
    \begin{align*}
	&\zeta(x) \mathcal{T}_j \widetilde{b}_{\text{d},n-j}(x) \zeta(x) \\
	&\quad = \int_{\R^d} K_j(x,y) \Big(\zeta(x)\widetilde{b}_{\text{d},n-j}(y)\zeta(x) \Big) \chi_{\big\{|x-y|> ([50\sqrt{d}]-\sqrt{d})2^{-n+j} \big\}}(y) \dif y.
    \end{align*}
Since $\supp (\Phi_j) \subseteq \big\{ x\in\R^d: 2^{j-1}\sqrt{d} \leq |x| \leq 2^{j+1}\sqrt{d} \big\}$,  we get
    \[
    K_j(x,y) = K(x,y)\Phi_j(x-y) = 0,
    \]
when
    \[
    |x-y|
    \geq([50\sqrt{d}]-\sqrt{d})2^{-n+j}
    \geq ([50\sqrt{d}]-\sqrt{d}) 2^{j-3}
    \geq 2^{j+1}\sqrt{d},
    \]
where the second inequality uses $n\leq 3$.
This shows that $\zeta \mathcal{T}_j \widetilde{b}_{\text{d},n-j}\zeta = 0$ when $n\leq 3$.
The same argument works for $\zeta \mathcal{T}_j b_{\text{d},n-j}^0 \zeta$, giving rise to (\ref{a claim to reduce the sum}).

We begin with estimating $\widetilde{b}_{\text{d}}$.
By Remark \ref{sums of bd from 1 to infty} and (\ref{a claim to reduce the sum}), it suffices to establish 	
    \begin{align*}
    \sum_{n\geq 4} \sum_{j\leq n-1} \big\|\zeta \mathcal{T}_j \widetilde{b}_{\text{d},n-j} \zeta \big\|_{L_1(\mN)} \lesssim \norm{f}_{L_1(\mN)}.
    \end{align*}

It follows from the definition of $\widetilde b_{\text{d},n}$ in Theorem \ref{nc CZ decomposition with convolution}, the cancellation condition in Lemma \ref{properties of nc CZ decomposition with convolution} $(ii)$, and the definition of $\widetilde{b}_{\text{d},{n-j}}^Q$ in Theorem \ref{nc CZ decomposition with convolution} that
    \begin{align*}
	\mathcal{T}_j \widetilde b_{\text{d},n-j} (x)
	&= \sum_{Q\in\mQ_{n-j}} \int_{\R^d} \big(K_j(x,y)-K_j(x,c(Q)\big) \widetilde{b}_{\text{d},{n-j}}^Q(y) \dif y \\
	&= \sum_{Q\in\mQ_{n-j}} \int_{\R^d} \int_{\R^d}\big(K_j(x,y)-K_j(x,c(Q)\big) b_{\text{d},n-j}^Q(u)h_{\frac{\sqrt{d}}{2}\ell(Q)}(y-u) \dif y \dif u.
    \end{align*}
Then, by $\supp (h_{\frac{\sqrt{d}}{2}\ell(Q)}) \subseteq B_{\frac{\sqrt{d}}{2}\ell(Q)}$, $\supp (b^Q_{\text{d},n-j}) \subseteq Q$, and using the Minkowski inequality,
    \begin{align*}
	&\big\|\zeta \mathcal{T}_j \widetilde b_{\text{d},n-j} \zeta \big\|_{L_1(\mN)} \\
	&\lesssim \sum_{Q\in\mQ_{n-j}} \int_{\R^d} \Big( \frac{\norm{h}_{\infty}}{\ell(Q)^d} \int_{Q}  \int_{ B_{\frac{\sqrt{d}}{2}\ell(Q)}(u) }  |K_j(x,y)-K_j(x,c(Q))|  \|b_{\text{d},n-j}^Q(u)\|_{L_1(\mM)} \dif y \dif u \Big) \dif x \\
	&\lesssim \sum_{Q\in\mQ_{n-j}} \int_{Q} \Big( \int_{\R^d} \frac{1}{|Q|}\int_{B_{\frac{\sqrt{d}}{2}\ell(Q)}(u)}  |K_j(x,y)-K_j(x,c(Q))|  \dif y \dif x \Big) \|b_{\text{d},n-j}^Q(u)\|_{L_1(\mM)}\dif u.
    \end{align*}
Fix a cube $Q\in\mQ_{n-j}$.
For $y\in B_{\frac{\sqrt{d}}{2}\ell(Q)}(u)$, we have
    \begin{align*}
    |y-c(Q)|
    \leq |y-u| + |u-c(Q)|
    \leq \frac{\sqrt{d}}{2}\ell(Q) + \frac{\sqrt{d}}{2}\ell(Q)
    \leq \sqrt{d}\ell(Q)
    \end{align*}
whenever $u\in Q$.
Hence, by letting $z=y-c(Q)$,
    \begin{align}\label{z< sqrt d l(Q)}
    & \frac{1}{|Q|}\int_{B_{\frac{\sqrt{d}}{2}\ell(Q)}(u)} |K_j(x,y)-K_j(x,c(Q))|  \dif y  \notag\\
	&\quad \lesssim  \frac{1}{|Q|}\int_{|z| \leq \sqrt{d}\ell(Q)}  |K_j(x,z+c(Q))-K_j(x,c(Q))| \dif z .
     \end{align}
Now, let the cube $\widehat Q$ be the unique father cube of $Q$ in $\mQ_{n-j-1}$.
This implies
    \begin{align*}
	 (\ref{z< sqrt d l(Q)})
	 &= \frac{2^d}{|\widehat Q|}  \int_{|z| \leq \frac{\sqrt{d}}{2}\ell(\widehat Q)} |K_j(x,z+c(Q))-K_j(x,c(Q))| \dif z
	 = \mathcal{K}_{j,\widehat Q}(x, c(Q)) \\
	 &\leq \sup_{y\in \widehat Q}\mathcal{K}_{j,\widehat Q}(x, y),
    \end{align*}
which leads to
    \begin{align*}
	 &\int_{\R^d} \frac{1}{|Q|}\int_{B_{\frac{\sqrt{d}}{2}\ell(Q)}(u)}  |K_j(x,y)-K_j(x,c(Q))|  \dif y \dif x \\
	 &\quad\lesssim \int_{\R^d} \sup_{y\in \widehat Q}\mathcal{K}_{j,\widehat Q}(x, y) \dif x \\
	 &\quad\lesssim \mathcal{H}(n-3) + \mathcal{H}(n-2) + \mathcal{H}(n-1) + \mathcal{H}(n) + 2^{-n+1},
    \end{align*}
where the last bound comes from Lemma \ref{the first lemma for bad function} with $n$ replaced by $n-1$.
Finally, applying the $L_1$-integral condition (\ref{L1 integral condition}) and the boundedness of $b_{\text{d},n}^Q$ in Lemma \ref{properties of nc CZ decomposition with convolution} $(ii)$, we obtain
    \begin{align*}
	\sum_{n\geq 4} \sum_{j\leq n-1} \big\|\zeta \mathcal{T}_j \widetilde{b}_{\text{d}} \zeta \big\|_{L_1(\mN)}
	&\lesssim  \sum_{n\geq 4} \sum_{j\leq n-1} \sum_{Q\in\mQ_{n-j}} \int_{Q}  \big\|b_{\text{d},n-j}^Q(u) \big\|_{L_1(\mM)} \dif u\\
	&\quad\quad \lbracket{\mathcal{H}(n-3) + \mathcal{H}(n-2) + \mathcal{H}(n-1) + \mathcal{H}(n) + 2^{-n+1}}  \\
	&\ \lesssim \sum_{n\geq 4} \big(4\mathcal{H}(n-3)+  2^{-n+1}\big) \sum_{m\geq 1} \sum_{Q\in\mQ_m} \big\|b_{\text{d},m}^Q\big\|_{L_1(\mN)} \\
	&\ \lesssim \norm{f}_1.
    \end{align*}

$(ii)$	
Again by Remark \ref{sums of bd from 1 to infty} and (\ref{a claim to reduce the sum}), it suffices to prove:
    \begin{align*}
	\sum_{n\geq 4} \sum_{j\leq n-1} \norm{\zeta \mathcal{T}_j b_{\text{d},n-j}^0 \zeta}_{L_1(\mN)}
	\lesssim \norm{f}_{L_1(\mN)}.
    \end{align*}

Fix a cube $Q\in\mQ_{n-j}$.
From the definition of $\widetilde{b}_{\text{d},n-j}^Q$ in Theorem \ref{nc CZ decomposition with convolution},
    \begin{align*}
    \mathcal{T}_j \widetilde{b}_{\text{d},n-j}^Q(x)
    &= \int_{\R^d} K_j(x,y) \Big(\int_{\R^d} b_{\text{d},n-j}^Q(y-z) h_{\frac{\sqrt{d}}{2}\ell(Q)}(z)\dif z \Big) \dif y \\
    &= \int_{\R^d} \Big(\int_{\R^d} K_j(x,y+z) h_{\frac{\sqrt{d}}{2}\ell(Q)}(z)\dif z\Big) b_{\text{d},n-j}^Q(y) \dif y.
    \end{align*}
Moreover, by $\int_{\R^d}h_{\frac{\sqrt{d}}{2}\ell(Q)}=1$,
    \begin{align*}
	\mathcal{T}_j b_{\text{d},n-j}^Q(x)
	= \int_{\R^d} \Big(\int_{\R^d} K_j(x,y) h_{\frac{\sqrt{d}}{2}\ell(Q)}(z) \dif z \Big)b_{\text{d},n-j}^Q(y) \dif y.
    \end{align*}
Noting again the supports of $h_{\frac{\sqrt{d}}{2}\ell(Q)}$, $b_{\text{d},n-j}^Q$ and using the Minkowski inequality, we get
    \begin{align*}
	&\norm{\zeta \mathcal{T}_j b_{\text{d},n-j}^0 \zeta}_{L_1(\mN)} \\
	&\ \lesssim  \sum_{Q\in\mQ_{n-j}} \int_{Q} \int_{\R^d}\Big( \frac{1}{|Q|}\int_{|z|\leq \frac{\sqrt{d}}{2}\ell(Q)}  |K_j(x,y+z)-K_j(x,y)| \dif z \Big) \dif x \|b_{\text{d},n-j}^Q(y)\|_{L_1(\mM)} \dif y \\
	&\ \lesssim \big(\mathcal{H}(n-2) + \mathcal{H}(n-1) + \mathcal{H}(n) + \mathcal{H}(n+1) + 2^{-n}\big) \sum_{Q\in\mQ_{n-j}}\|b_{\text{d},n-j}^Q\|_{L_1(\mN)},
    \end{align*}
where the last inequality follows from Lemma \ref{the first lemma for bad function}.
Finally, an application of the $L_1$-integral condition \eqref{L1 integral condition}, and the boundedness of $b_{\mathrm{d},n}^Q$ in Lemma \ref{properties of nc CZ decomposition with convolution} $(ii)$, yield
    \begin{align*}
	\sum_{n\geq 4} \sum_{j\leq n-1} \norm{\zeta \mathcal{T}_j b_{\text{d},n-j}^0 \zeta}_{L_1(\mN)}
	&\lesssim \sum_{n\geq 4} \lbracket{4\mathcal{H}(n-2) + 2^{-n}} \sum_{m\geq 1} \sum_{Q\in\mQ_m} \|b_{\text{d},m}^Q\|_{L_1(\mN)} \\
	&\lesssim  \norm{f}_{L_1(\mN)},
    \end{align*}
which completes the argument.

\subsubsection{The proof of Lemma \ref{Off-diagonal part zeta Tj boff zeta}}

$(i)$
As in the proof for the diagonal part $b_{\text{d}}$, we state the following equality, whose proof is similar to that of (\ref{a claim to reduce the sum}):
for any $j\in\Z$,
    \begin{align}\label{a claim to reduce the sum 2}
    \zeta \mathcal{T}_j \widetilde{b}_{\text{off},n-j}\zeta
    =\zeta \mathcal{T}_j b_{\text{off},n-j}^0 \zeta =0, \quad\text{when}\ n\leq 3.
    \end{align}
Together with Remark \ref{sums of bd from 1 to infty}, it suffices to prove
    \begin{align*}
    \sum_{n\geq4} \sum_{j\leq n-1} \big\|\zeta \mathcal{T}_j \widetilde{b}_{\text{off},n-j} \zeta \big\|_{L_1(\mN)}
    \lesssim \norm{f}_{L_1(\mN)}.
    \end{align*}

For a fixed $x\in\R^d$, the definition of $\widetilde{b}_{\text{off},n-j}$ in Theorem \ref{nc CZ decomposition with convolution} and the cancellation condition in Lemma \ref{properties of nc CZ decomposition with convolution} $(iv)$ give
    \begin{align*}
    \mathcal{T}_j \widetilde{b}_{\text{off},n-j} (x)
    = \sum_{Q\in\mQ_{n-j}}\int_{\R^d} \big(K_j(x,y)-K_j(x,c(Q)) \big) \widetilde{b}_{\text{off},n-j}^Q(y) \dif y.
    \end{align*}
Next, recall the definition of $\widetilde{b}_{\text{off},n}^Q$ in Theorem \ref{nc CZ decomposition with convolution}: for a fixed cube  $Q\in\mQ_n$, $\widetilde{b}_{\text{off},n}^Q = b_{\text{off},n}^Q*h_{\frac{\sqrt{d}}{2}\ell(Q)}$, where $b_{\text{off},n}^Q = p_Q(f-f_n)q_Q\chi_Q + q_Q(f-f_n)p_Q\chi_Q$.
Accordingly, we can  majorize $\sum\limits_{n\geq4} \sum\limits_{j\leq n-1} \big\|\zeta \mathcal{T}_j \widetilde{b}_{\text{off},n-j} \zeta \big\|_{L_1(\mN)}$ by the sum of the following four terms:
    \begin{align*}
	F_i
	:= \sum_{n\geq 4} \sum_{j\leq n-1}\sum_{Q\in\mQ_{n-j}} \int_{\R^d}  \Big\|\int_{\R^d} \big(K_j(x,y)-K_j(x,c(Q))\big) \widetilde{b}_{\text{off},n-j}^{Q,i}(y)  \dif y\Big\|_{L_1(\mM)} \dif x,
    \end{align*}
for $i=1,2,3,4$, where
    \begin{align*}
	&\widetilde{b}_{\text{off},n-j}^{Q,1}  := p_Q \big((f\chi_Q) * h_{\frac{\sqrt{d}}{2}\ell(Q)}\big) q_Q, \quad
	\widetilde{b}_{\text{off},n-j}^{Q,2}  := p_Q \big((f_Q\chi_Q) * h_{\frac{\sqrt{d}}{2}\ell(Q)}\big) q_Q, \\
	&\widetilde{b}_{\text{off},n-j}^{Q,3}  := q_Q \big((f\chi_Q) * h_{\frac{\sqrt{d}}{2}\ell(Q)}\big) p_Q, \quad
	\widetilde{b}_{\text{off},n-j}^{Q,4}  := q_Q \big((f_Q\chi_Q) * h_{\frac{\sqrt{d}}{2}\ell(Q)}\big) p_Q.
    \end{align*}
We provide a detailed proof only for $F_1$;  analogous arguments apply to $F_2$--$F_4$ under the fact that $\phi(f_n p_n) = \phi(f p_n)$.


Define
    \begin{align*}
    \mathcal{L}_j(x,u)
    := \int_{|u-y|\leq \frac{\sqrt{d}}{2}\ell(Q)} (K_j(x,y)-K_j(x,c(Q))) h_{\frac{\sqrt{d}}{2}\ell(Q)}(y-u)\dif y,	
    \end{align*}
and decompose it into positive and negative parts:
    \begin{align*}
    \mathcal{L}_j(x,u) = \mathcal{L}_j^+(x,u) - \mathcal{L}_j^-(x,u).
    \end{align*}
Hence, for a fixed cube $Q\in\mQ_{n-j}$, using the supports of $h_{\frac{\sqrt{d}}{2}\ell(Q)}$ and $\chi_Q$, the noncommutative $L_1$-norm inside $F_1$ equals
    \begin{align*}
    \Big\|\int_Q \mathcal{L}_j(x,u) p_Q f(u) q_Q\dif u \Big\|_{L_1(\mM)}.
    \end{align*}
By the triangle inequality, it is enough to treat the positive part, i.e.
    \begin{align} \label{the nc L1 norm in F_1}
    \Big\|\int_Q \mathcal{L}_j^+(x,u) p_Q f(u) q_Q\dif u \Big\|_{L_1(\mM)}.
    \end{align}


Let $\widehat Q$ be the unique father cube of $Q$ in $\mQ_{n-j-1}$.
As in the proof of Lemma \ref{Diagonal part zeta Tj bd zeta} $(i)$, when $u\in Q$, we can estimate $\mathcal{L}_j^+(x,u)$ as follows:
   \begin{align} \label{estimate for |Lj(x,u)|}
   	|\mathcal{L}_j^+(x,u)|
   	&\lesssim \frac{1}{|Q|} \int_{|u-y|\leq \frac{\sqrt{d}}{2}\ell(Q)} |K_j(x,y)-K_j(x,c(Q))|\dif y \notag\\
   	&\lesssim \frac{1}{|\widehat Q|} \int_{|z|\leq \frac{\sqrt{d}}{2}\ell(\widehat Q)} |K_j(x,z+c(Q))-K_j(x,c(Q))|\dif z \notag\\
   	&\leq \sup_{y\in \widehat Q} \mathcal{K}_{j,\widehat Q}(x, y).
    \end{align}

By Lemma \ref{also a useful lemma for bad function} in Appendix \ref{Appendix: A Holder inequality in column spaces}, there exists a contraction $m(x)$ such that
    \begin{align*}
    &\int_Q \mathcal{L}_j^+(x,u) p_Q f(u) q_Q\dif u \\
    &\quad=\Big( \int_Q \mathcal{L}^+_j(x,u) p_Q f(u) p_Q \dif u \Big)^{\frac{1}{2}} m(x) \Big( \int_Q \mathcal{L}_j^+(x,u) q_Q f(u) q_Q d\dif u \Big)^{\frac{1}{2}}\\
    &\quad=\Big( \int_Q \mathcal{L}^+_j(x,u) p_Q f(u) p_Q \dif u \Big)^{\frac{1}{2}} p_Qm(x) \Big( \int_Q \mathcal{L}^+_j(x,u) q_Q f(u) q_Q d\dif u \Big)^{\frac{1}{2}}.
    \end{align*}
Here we used that $(p_Q X p_Q)^{\frac{1}{2}} = (p_Q X p_Q)^{\frac{1}{2}} p_Q$ when $X$ is positive.
Together it with the H\"older inequality and (\ref{estimate for |Lj(x,u)|}), we can bound (\ref{the nc L1 norm in F_1}) by
    \begin{align*}	
    &\Big\| \Big( \int_Q \mathcal{L}_j^+(x,u) p_Q f(u) p_Q \dif u \Big)^{\frac{1}{2}} \Big\|_{L_2(\mM)} \Big\| p_Q m(x)\Big( \int_Q \mathcal{L}_j^+(x,u) q_Q f(u) q_Q \dif u \Big)^{\frac{1}{2}} \Big\|_{L_2(\mM)} \\
    &\lesssim \Big(\sup_{y\in \widehat Q} \mathcal{K}_{j,\widehat Q}(x, y)\Big) \tau\Big( \int_Q p_Q f(u) p_Q \dif u \Big)^{\frac{1}{2}} \tau\Big( p_Q m(x)\big( \int_Q q_Q f(u) q_Q \dif u \big) m(x)^* p_Q\Big)^{\frac{1}{2}}.
    \end{align*}
On the one hand, it is easy to see
    \begin{align*}
    \tau\Big( \int_Q p_Q f(u) p_Q \dif u \Big)
    \leq \phi\big(f p_Q \chi_Q \big).
    \end{align*}
On the other hand, by the definition of $f_Q$, (\ref{q_Q f_Q q_Q < lambda q_Q}) and $|m(x)|^2\leq 1$,
    \begin{align*}
    \tau\Big( p_Q m(x)\big( \int_Q q_Q f(u) q_Q \dif u \big) m(x)^* p_Q\Big)
    &= \tau\big(|Q| p_Q m(x) q_Q f_Q q_Q  m(x)^* p_Q \big) \\
    &\leq \lambda |Q| \tau\big( p_Q m(x) q_Q  m(x)^* p_Q \big) \\
    &\leq \lambda |Q|\tau(p_Q).
    \end{align*}
Hence,
    \begin{align*}
    (\ref{the nc L1 norm in F_1})
    \lesssim \Big(\sup_{y\in \widehat Q} \mathcal{K}_{j,\widehat Q}(x, y)\Big) \phi\big(f p_Q \chi_Q \big)^{\frac{1}{2}} \big( \lambda |Q|\tau(p_Q) \big)^{\frac{1}{2}}.
    \end{align*}
This estimate, combined with Lemma \ref{the first lemma for bad function} and the  $L_1$-integral condition (\ref{L1 integral condition}), implies
    \begin{align*}
    F_1
    &\lesssim \sum_{n\geq 4} \sum_{j\leq n-1}\sum_{Q\in\mQ_{n-j}} \Big( \int_{\R^d} \sup_{y\in \widehat Q} \mathcal{K}_{j,\widehat Q}(x, y) \dif x \Big) \phi\big(f p_Q \chi_Q \big)^{\frac{1}{2}} \big( \lambda |Q|\tau(p_Q) \big)^{\frac{1}{2}} \\
    &\lesssim \sum_{n\geq 4}  \Big(\mathcal{H}(n-3) + \mathcal{H}(n-3) + \mathcal{H}(n-1) + \mathcal{H}(n) + 2^{-n+1}\Big) \\
    &\quad\quad \sum_{j\leq n-1}\sum_{Q\in\mQ_{n-j}}\phi\big(f p_Q \chi_Q \big)^{\frac{1}{2}} \big( \lambda |Q|\tau(p_Q) \big)^{\frac{1}{2}} \\
    &\lesssim \sum_{m\geq 1}\sum_{Q\in\mQ_{m}}\phi\big(f p_Q \chi_Q \big)^{\frac{1}{2}} \big( \lambda |Q|\tau(p_Q) \big)^{\frac{1}{2}}.
    \end{align*}
Finally, we prove
   \begin{align} \label{lesssim ||f||1}
   \sum_{m\geq 1}\sum_{Q\in\mQ_{m}}\phi\big(f p_Q \chi_Q \big)^{\frac{1}{2}} \big( \lambda |Q|\tau(p_Q) \big)^{\frac{1}{2}}
   \lesssim \| f\|_{L_1(\mN)},
   \end{align}
which gives the desired bound for $F_1$.
Indeed, by applying the Cauchy-Schwarz inequality twice, we estimate the left-hand side of (\ref{lesssim ||f||1}) as
    \begin{align*}	
    &\sum_{n\in\Z} \Big( \sum_{Q\in\mQ_n} \phi(fp_Q\chi_Q)\Big)^{\frac{1}{2}} \Big( \sum_{Q\in\mQ_n} \tau (p_Q) (|Q|\lambda\Big)^{\frac{1}{2}} \\
	&\quad \leq \Big( \sum_{n\in\Z} \sum_{Q\in\mQ_n} \phi(fp_Q\chi_Q)\Big)^{\frac{1}{2}} \Big( \sum_{n\in\Z} \sum_{Q\in\mQ_n} \tau (p_Q) (|Q|\lambda) \Big)^{\frac{1}{2}} \\
	&\quad=  \Big( \sum_{n\geq 1} \phi(fp_n) \Big)^\frac{1}{2} \Big( \sum_{n\geq 1} \lambda\phi (p_n) \Big)^\frac{1}{2}.
    \end{align*}
Then, it follows from (\ref{sum pn=1-q}) that $\sum\limits_{n\geq 1}\phi(p_n) = \phi(1-q)$ and $\sum\limits_{n\geq 1}\phi(fp_n) = \phi\big(f(1-q)\big)$.
Combined with Lemma \ref{cuculescu's construction} $(iv)$, this yields
    \begin{align*}
    \sum_{m\geq 1}\sum_{Q\in\mQ_{m}}\phi\big(f p_Q \chi_Q \big)^{\frac{1}{2}} \big( \lambda |Q|\tau(p_Q) \big)^{\frac{1}{2}}	
    \leq \Big( \lambda\phi (1-q) \Big)^\frac{1}{2}  \Big( \phi(f(1-q)) \Big)^\frac{1}{2}
    \lesssim \|f\|_{L_1(\mN)},
    \end{align*}
which proves (\ref{lesssim ||f||1}) and completes the estimate for $F_1$.

$(ii)$
We now consider $b_{\text{off}}^0$.
By (\ref{a claim to reduce the sum 2}) and Remark \ref{sums of bd from 1 to infty}, it suffices to prove
    \begin{align*}
    \sum_{n\geq 4} \sum_{j\leq n-1} \big\| \zeta \mathcal{T}_j b_{\text{off},n-j}^0 \zeta \big\|_{L_1(\mN)}
    \lesssim \norm{f}_{L_1(\mN)}.
    \end{align*}

For a fixed $x\in\R^d$, from the definition of $b_{\text{off},n-j}^0$ in Theorem \ref{nc CZ decomposition with convolution}, a direct calculation implies that $\mathcal{T}_j b_{\text{off},n-j}^0(x)$ equals to
    \begin{align*}
	& \sum_{Q\in\mQ_{n-j}} \int_{\R^d} K_j(x,y)(b_{\text{off},n}^Q-\widetilde{b}_{\text{off},n}^Q)(y) \dif y \\
	&= \sum_{Q\in\mQ_{n-j}} \int_{Q} \int_{|z|\leq \frac{\sqrt{d}}{2}\ell(Q)} \big(K_j(x,y)-K_j(x,y+z) \big) h_{\frac{\sqrt{d}}{2}\ell(Q)}(z) b^Q_{\text{off},n-j}(y) \dif z\dif y.
    \end{align*}
Recall the definition of $b_{\text{off},n}^Q$ in Theorem \ref{nc CZ decomposition with convolution}: for a fixed cube  $Q\in\mQ_n$, $b_{\text{off},n}^Q = p_Q(f-f_n)q_Q\chi_Q + q_Q(f-f_n)p_Q\chi_Q$.
Again, $\sum\limits_{n\geq 4} \sum\limits_{j\leq n-1} \big\|\zeta \mathcal{T}_j b_{\text{off},n-j}^0 \zeta \big\|_{L_1(\mN)}$ can be majorized by the sum of the following four terms:
    \begin{align*}
	G_i:=
	& \sum_{n\geq 4} \sum_{j\leq n-1}\sum_{Q\in\mQ_{n-j}} \\
	&\int_{\R^d} \Big\| \int_{Q} \int_{|z|\leq \frac{\sqrt{d}}{2}\ell(Q)} \big(K_j(x,y+z)-K_j(x,y)\big) b_{\text{off},n-j}^{Q,i} (y,z) \dif z \dif y \Big\|_{L_1(\mM)}  \dif x,
    \end{align*}
for $i=1,2,3,4$, where
    \begin{align*}
	&b_{\text{off},n-j}^{Q,1} (y,z) = p_Q (f\chi_Q)(y) h_{\frac{\sqrt{d}}{2}\ell(Q)}(z) q_Q, \quad
	b_{\text{off},n-j}^{Q,2} (y,z) = p_Q (f_{Q}\chi_Q)(y) h_{\frac{\sqrt{d}}{2}\ell(Q)}(z) q_Q, \\
	&b_{\text{off},n-j}^{Q,3} (y,z) = q_Q (f\chi_Q)(y) h_{\frac{\sqrt{d}}{2}\ell(Q)}(z) p_Q, \quad
	b_{\text{off},n-j}^{Q,4} (y,z) = q_Q (f_{Q}\chi_Q )(y) h_{\frac{\sqrt{d}}{2}\ell(Q)}(z) p_Q.
    \end{align*}
We estimate only $G_1$; the other terms are similar by using $\phi(f_n p_n) = \phi(f p_n)$.

Fix a cube $Q\in\mQ_{n-j}$.
By the supports of $h_{\frac{\sqrt{d}}{2}\ell(Q)}$ and $\chi_Q$, the noncommutative $L_1$-norm inside $G_1$ equals
    \begin{align*}
    \Big\|\int_Q \mathcal{J}_j(x,y) p_Q f(y) q_Q\dif y \Big\|_{L_1(\mM)},
    \end{align*}
where
    \begin{align*}
    \mathcal{J}_j(x,y)
    := \int_{|z|\leq \frac{\sqrt{d}}{2}\ell(Q)} \big( K_j(x,y+z)-K_j(x,y) \big) h_{\frac{\sqrt{d}}{2}\ell(Q)}(z)\dif z.	
    \end{align*}
We also write $\mathcal{J}_j(x,y) = \mathcal{J}^+_j(x,y) - \mathcal{J}^-_j(x,y)$ and consider only $\mathcal{J}^+_j(x,y)$, i.e. 
    \begin{align}\label{the nc L1 norm in G_1}
    \Big\|\int_Q \mathcal{J}_j^+(x,y) p_Q f(y) q_Q\dif y \Big\|_{L_1(\mM)},
    \end{align}
Immediately, when $y\in Q$, it follows from the definition of $h_{\frac{\sqrt{d}}{2}\ell(Q)}$ that
    \begin{align*}
    |\mathcal{J}^+_j(x,y)|
    \lesssim \frac{1}{|Q|}\int_{|z|\leq \frac{\sqrt{d}}{2}\ell(Q)} |K_j(x,y+z)-K_j(x,y)| \dif z
    \leq \sup_{y\in Q} \mathcal{K}_{j,Q}(x,y).
    \end{align*}
Now, following the same reasoning as for $F_1$ in Lemma \ref{Off-diagonal part zeta Tj boff zeta} $(i)$, we bound (\ref{the nc L1 norm in G_1}) by
    \begin{align*}	
    &\Big\| \Big( \int_Q \mathcal{J}_j^+(x,y) p_Q f(y) p_Q \dif y \Big)^{\frac{1}{2}} \Big\|_{L_2(\mM)} \Big\| p_Q m'(x)\Big( \int_Q \mathcal{J}_j^+(x,y) q_Q f(y) q_Q \dif y \Big)^{\frac{1}{2}} \Big\|_{L_2(\mM)} \\
    &\lesssim \Big(\sup_{y\in Q} \mathcal{K}_{j,Q}(x, y)\Big) \tau\Big( \int_Q p_Q f(y) p_Q \dif y \Big)^{\frac{1}{2}} \tau\Big( p_Q m'(x) \big( \int_Q q_Q f(u) q_Q \dif u \big) (m'(x))^* p_Q\Big)^{\frac{1}{2}},
    \end{align*}
where $m'(x)$ is the contraction provided by Lemma \ref{also a useful lemma for bad function} in Appendix \ref{Appendix: A Holder inequality in column spaces}.
Finally, the remaining part of the proof follows the same method as used for $F_1$.
Hence, we complete the estimate for $G_1$ and the proof of Lemma \ref{Off-diagonal part zeta Tj boff zeta}.


\appendix
\section{Noncommutative $L_p$ spaces}\label{Appendix: Noncommutative Lp spaces and maximal norms}
This part consists of three subsections: noncommutative $L_p$ spaces, noncommutative maximal norms, and noncommutative square functions.
All these content are standard in noncommutative harmonic analysis, we refer to see \cite{Cuculescu1971Martingales, Junge2002Doob, Parcet2009Pseudo-localization, Pisier1998noncommutative, PisierQuanhua2003} for more details.

\subsection{Noncommutative $L_p$ spaces}
Let $\mM$ be a von Neumann algebra equipped with an $n. s. f.$ trace $\tau$.
Let $\mM_+$ be the positive part of $\mM$ and $\mS_{+}$ be the set of all  $x\in\mM_+$ with $\tau(s(x))<\infty$, where $s(x)$ is the support projection of $x$.
Let  $\mS$ be the linear span of $\mS_+$.
Then, $\mS$ is a $w^*$-dense $*$-subalgebra of $\mM$.

Let $0<p<\infty$.
For any $x\in\mS$, the operator $|x|^p \in \mS_+$ (meaning $\tau(|x|^p) <\infty$) , where $|x|=(x^* x)^{\frac{1}{2}}$ is the modulus of $x$.
We define
    \begin{align*}
	\norm{x}_{L_p(\mM)} = \big(\tau(|x|^p)\big)^{\frac{1}{p}},\quad x\in\mS.
    \end{align*}
The noncommutative $L_p$ space associated with $(\mM, \tau)$, denoted by $L_p(\mM, \tau)$ or simply $L_p(\mM)$, is defined as the completion of the space $(\mS, \norm{\cdot}_{L_p(\mM)})$.
For convenience, we set $L_{\infty}(\mM) = \mM$ equipped with the operator norm $\norm{\cdot}_{\mM}$.
We refer the reader to \cite{PisierQuanhua2003}  for more information regarding noncommutative $L_p$ spaces.


\subsection{Noncommutative maximal norms}

The generalization of the maximal function to noncommutative spaces faces a challenge: operators in von Neumann algebras are not directly comparable.
Remarkably, Pisier \cite{Pisier1998noncommutative} and Junge \cite{Junge2002Doob} overcame this difficulty by directly defining the noncommutative maximal norm.

    \begin{definition}
	For $1\leq p\leq \infty$, we define $L_p(\mM;\ell_{\infty})$ as the space of all sequences $x= (x_n)_{n\in \Z}$ in $L_p(\mM)$ which admit a factorization of the following form: there exist $a$, $b\in L_{2p}(\mM)$ and a bounded sequence $y= (y_n)_{n\in\Z}$ in $L_{\infty}(\mM)$ such that
	\begin{align*}
		x_n = a y_n b, \quad n\in\Z.
	\end{align*}
	The norm of $x \in L_p(\mM;\ell_{\infty})$ is defined as
	\begin{align*}
		\norm{x}_{L_p(\mM;\ell_{\infty})} = \inf_{x_n = a y_n b} \big\{ \norm{a}_{L_{2p}(\mM)}  \sup_{n\in\Z} \norm{y_n}_{L_{\infty}(\mM)}    \norm{b}_{L_{2p}(\mM)} \big\},
	\end{align*}
	where the infimum is taken over all factorizations of $x$ as above.
    \end{definition}
One can check that $L_p(\mM;\ell_{\infty})$ is a Banach space equipped with the noncommutative maximal norm $\norm{\cdot}_{L_p(\mM;\ell_{\infty})}$.
A positive sequence $x=(x_n)_{n\in\Z}\in L_p(\mM;\ell_{\infty})$ if and only if there exists a positive element $a\in L_p(\mM)$ such that
    \begin{align*}
	0 < x_n \leq a, \quad \forall n\in\Z.
    \end{align*}
Moreover, its norm is given by
    \begin{align*}
	\norm{x}_{L_p(\mM;\ell_{\infty})}
	= \inf\big\{\norm{a}_{L_p(\mM)}: 0 < x_n \leq a,\ \forall n\in\Z\big\}.
    \end{align*}
Similarly, when $x= (x_n)_{n\in\Z}$ is a sequence of self-adjoint operators in $L_p(\mM)$, $x \in L_p(\mM;\ell_{\infty})$ if and only if there exists a positive element $a\in L_p(\mM)$ such that
    \begin{align*}
	-a \leq x_n \leq a,\quad \forall n\in\Z.
    \end{align*}
In this case,
    \begin{align*}
	\norm{x}_{L_p(\mM;\ell_{\infty})}
	= \inf \big\{\norm{a}_{L_p(\mM)}: -a \leq x_n \leq a,\ a>0,\ \forall n\in\Z \big\}.
    \end{align*}

The concept of the weak maximal norm is also essential in this paper.
    \begin{definition}
	For $1\leq p< \infty$ and $x=(x_n)_{n\in\Z}\subset L_p(\mM)$, we define 	
	   \begin{align*}
	   \norm{x}_{\Lambda_{p,\infty}(\mM;\ell_{\infty})}
	   = \sup_{\lambda>0} \lambda \inf_{e\in\mP(\mM)} \big\{ \big( \tau(e^{\perp}) \big)^{\frac{1}{p}} : \norm{e x_n e}_{L_{\infty}(\mM)} \leq \lambda,\ \forall n\in\Z \big\},
	   \end{align*}
	where $\mP(\mM)$ is the set of all projections in $\mM$.
	A sequence $x = (x_n)_{n\in\Z}$ is said to belong to $\Lambda_{p,\infty}(\mM;\ell_{\infty})$ if $\norm{x}_{\Lambda_{p,\infty}(\mM;\ell_{\infty})} < \infty$.
    \end{definition}

The space $\Lambda_{p,\infty}(\mM;\ell_{\infty})$ is a quasi-Banach space.
For a positive sequence $x = (x_n) \subset L_p(\mM)$, the weak norm also admits the equivalent expression:
    \begin{align*}
	\norm{x}_{\Lambda_{p,\infty}(\mM;\ell_{\infty})}
	=\sup_{\lambda>0} \lambda \inf_{e\in\mP(\mM)} \Big\{ \big(\tau(e^{\perp}) \big)^{\frac{1}{p}} : 0\leq e x_n e \leq \lambda, \ \forall n\in\Z\Big\}.
    \end{align*}


\subsection{Noncommutative square functions}\label{Appendix: A Holder inequality in column spaces}
Consider a measure space $(\Omega, \mu)$ and an operator-valued function $f$.
Define
    \begin{align*}
    \norm{f}_{L_p(\mM;L_2^c(\Omega))} = \Big\|\Big(\int_{\Omega} f^*(t)f(t)\dif\mu (t) \Big)^{\frac{1}{2}}\Big\|_{L_p(\mM)}.
    \end{align*}
Then, the column space $L_p(\mM;L_2^c(\Omega))$ is defined as the space of all such functions $f$ for which $\norm{f}_{L_p(\mM;L_2^c(\Omega))} <\infty$.
When the measure is clear, we write simply $\dif\omega$ instead of $\dif\mu(\omega)$.

First, we introduce a H\"older inequality for the square functions.
    \begin{lemma} \label{a useful lemma for bad function}
    Let $0<p,q,r\leq\infty$ be such that $\frac{1}{r} = \frac{1}{p} + \frac{1}{q}$.
    Then for any $f\in L_p(\mM; L_2^c(\Sigma))$ and $g\in L_q(\mM; L_2^c(\Sigma))$,
    \begin{align*}
	\Big\| \int_{\Sigma} f^*(\omega) g(\omega)\dif\omega\Big\|_{L_r(\mM)}
	\leq \Big\| \Big(\int_{\Sigma}|f(\omega)|^2 \dif\omega\Big)^{\frac{1}{2}}\Big\|_{L_p(\mM)}
	     \Big\|\Big(\int_{\Sigma}|g(\omega)|^2 \dif\omega\Big)^{\frac{1}{2}} \Big\|_{L_q(\M)}.
	\end{align*}
    \end{lemma}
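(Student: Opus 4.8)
\section*{Proof proposal for Lemma \ref{a useful lemma for bad function}}

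The statement is a noncommutative H\"older inequality for column spaces, and the plan is to realize $\int_{\Sigma}f^*(\omega)g(\omega)\dif\omega$ as an honest operator product $F^*G$ in noncommutative $L_p$-spaces over an amplified von Neumann algebra, and then to invoke the scalar-index noncommutative H\"older inequality ---namely $\norm{xy}_{L_r}\leq\norm{x}_{L_p}\norm{y}_{L_q}$ for any von Neumann algebra equipped with an n.s.f. trace, whenever $\frac1r=\frac1p+\frac1q$ (see \cite{PisierQuanhua2003}).

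First I would treat the model case of simple functions $f=\sum_{k=1}^{N}a_k\chi_{E_k}$ and $g=\sum_{k=1}^{N}b_k\chi_{E_k}$, with $(E_k)_{k=1}^{N}$ pairwise disjoint of finite measure and $a_k,b_k\in\mS$. Passing to $M_N(\mM)$ with the trace $\tau\otimes\mathrm{tr}$, put
\[
F=\sum_{k=1}^{N}\mu(E_k)^{1/2}\,a_k\otimes e_{k1},\qquad
G=\sum_{k=1}^{N}\mu(E_k)^{1/2}\,b_k\otimes e_{k1}.
\]
A one-line computation with matrix units gives
\[
F^*G=\Big(\int_{\Sigma}f^*(\omega)g(\omega)\dif\omega\Big)\otimes e_{11},\qquad
F^*F=\Big(\int_{\Sigma}|f(\omega)|^2\dif\omega\Big)\otimes e_{11},
\]
and likewise for $G^*G$. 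Since the corner embedding $x\mapsto x\otimes e_{11}$ is trace preserving, this yields $\norm{F}_{L_p(M_N(\mM))}=\big\|(\int_{\Sigma}|f|^2\dif\omega)^{1/2}\big\|_{L_p(\mM)}$, the analogous formula for $\norm{G}_{L_q(M_N(\mM))}$, and $\norm{F^*G}_{L_r(M_N(\mM))}=\big\|\int_{\Sigma}f^*g\,\dif\omega\big\|_{L_r(\mM)}$. Applying noncommutative H\"older to the product $F^*\cdot G$ and using $\norm{F^*}_{L_p}=\norm{F}_{L_p}$ gives exactly the asserted inequality in this case.

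To remove the restriction to simple functions, I would invoke the density of simple functions in $L_p(\mM;L_2^c(\Sigma))$ for $p<\infty$: approximating $f$ (and $g$, when $q<\infty$) and applying the inequality just proved to the differences shows that $(\int_{\Sigma}f_n^*g_n\,\dif\omega)_n$ is Cauchy in $L_r(\mM)$, its limit being the meaning we attach to $\int_{\Sigma}f^*g\,\dif\omega$ ---and agreeing with the pointwise Bochner integral whenever the latter converges, in particular in every application in this paper, where $\Sigma$ is a Euclidean ball and the integrands are scalar functions times fixed operators--- while letting $n\to\infty$ preserves the inequality. When an exponent equals $\infty$ one may instead realize $f$ and $g$ directly as (possibly unbounded) operators affiliated with $\mM\,\ot\,B(\C\oplus L_2(\Sigma))$ in the block form above, which makes $F,G$ legitimate elements of the corresponding $L_p$-spaces with no approximation at all. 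The heart of the matter is the single identity $F^*G=(\int f^*g)\otimes e_{11}$ combined with the known noncommutative H\"older inequality; accordingly I do not expect a genuine analytic obstacle, the only point requiring care being this routine measure-theoretic (or affiliated-operator) bookkeeping for a non-discrete index set $\Sigma$.
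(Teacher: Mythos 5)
The paper does not prove this lemma but cites Pisier--Xu \cite{PisierQuanhua2003}; your argument is the standard one from that reference, realizing the column space $L_p(\mM;L_2^c(\Sigma))$ as a corner of $L_p$ over the amplification $\mM\,\ot\,B(L_2(\Sigma))$ (or its finite-dimensional model $M_N(\mM)$ for simple functions) and then invoking ordinary noncommutative H\"older. The matrix-unit identity $F^*G=\big(\int_{\Sigma}f^*g\,\dif\omega\big)\otimes e_{11}$, the trace-preserving corner embedding $x\mapsto x\otimes e_{11}$, and the density/affiliated-operator bookkeeping for the passage to general $f,g$ are all handled correctly, so the proposal is sound.
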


The next result was communicated to the authors of \cite{hong2023maximal} by 
Cadilhac during their work on the maximal Calder\'on-Zygmund operators. We include a proof for completeness.
    \begin{lemma} \label{also a useful lemma for bad function}
     Let $0<p,q\leq\infty$.
     For any $f\in L_p(\mM; L_2^c(\Sigma))$ and $g\in L_q(\mM; L_2^c(\Sigma))$, there exists a contraction operator $m$ such that
    \begin{align*}
    	\int_{\Sigma} f^*(\omega) g(\omega)\dif\omega = \Big( \int_{\Sigma} f^*(\omega) f(\omega)\dif\omega \Big)^{\frac{1}{2}} m \Big( \int_{\Sigma} g^*(\omega) g(\omega)\dif\omega \Big)^{\frac{1}{2}}.
    \end{align*}
    \end{lemma}

\begin{proof}
Fix $\epsilon>0$ and set
    \begin{align*}
    F_{\epsilon}:= \int_{\Sigma} f^*(\omega)f(\omega) \dif \omega + \epsilon\Bone_{\mM}	
    \quad\text{and}\quad
    G_{\epsilon}:= \int_{\Sigma} g^*(\omega)g(\omega) \dif \omega + \epsilon\Bone_{\mM}.
    \end{align*}
Then, we define $m_{\epsilon}$ by
    \begin{align*}
    m_{\epsilon}:= F_{\epsilon}^{-\frac{1}{2}} \Big( \int_{\Sigma} f^*(\omega) g(\omega) \dif \omega\Big) G_{\epsilon}^{-\frac{1}{2}},
    \end{align*}
so that
    \begin{align}\label{pojpkljl}
    \int_{\Sigma} f^*(\omega) g(\omega) \dif \omega
    = F_{\epsilon}^{\frac{1}{2}} m_{\epsilon} G_{\epsilon}^{\frac{1}{2}}.
    \end{align}
We first show that each $m_{\epsilon}$ is a contraction in $\mM$.
To prove this, we rewrite $m_{\epsilon}$ as
    \begin{align*}
    m_{\epsilon}
    = \int_{\Sigma} \big(f(\omega) F_{\epsilon}^{-\frac{1}{2}}\big)^* \big(g(\omega) G_{\epsilon}^{-\frac{1}{2}}\big) \dif \omega.
    \end{align*}
Applying Lemma \ref{a useful lemma for bad function}, we get
    \begin{align*}
    \| m_{\epsilon}\|_{L_{\infty}(\mM)}
    &\leq \Big\| F_{\epsilon}^{-\frac{1}{2}} \Big( \int_{\Sigma} |f(\omega)|^2 \dif\omega \Big)^{\frac{1}{2}} F_{\epsilon}^{-\frac{1}{2}}\Big\|_{L_{\infty}(\mM)}   \Big\| G_{\epsilon}^{-\frac{1}{2}} \Big( \int_{\Sigma} |g(\omega)|^2 \dif\omega \Big)^{\frac{1}{2}} G_{\epsilon}^{-\frac{1}{2}}\Big\|_{L_{\infty}(\mM)} \\
    &= \big\| F_{\epsilon}^{-\frac{1}{2}} F_0 F_{\epsilon}^{-\frac{1}{2}}\big\|_{L_{\infty}(\mM)}   \big\| G_{\epsilon}^{-\frac{1}{2}} G_0 G_{\epsilon}^{-\frac{1}{2}}\big\|_{L_{\infty}(\mM)} \\
    &\leq 1,
    \end{align*}
where the last inequality follows from $F_0\leq F_{\epsilon}$ and $G_0\leq G_{\epsilon}$.
Then, the $w^*$-compactness of the unit ball in $\mM$ implies that we can find an operator $m$ such that $\| m\|_{L_{\infty}(\mM)} \leq 1$ and $m_{\epsilon} \overset{w^*}{\rightarrow} m$ as $\epsilon\rightarrow 0$.
Moreover, we see that $F_{\epsilon} \overset{w^*}{\rightarrow} F_0$, $G_{\epsilon} \overset{w^*}{\rightarrow} G_0$ as $\epsilon\rightarrow 0$.
Passing to the limit in (\ref{pojpkljl}), we obtain    \begin{align*}
    \int_{\Sigma} f^*(\omega) g(\omega) \dif \omega
    = F_{0}^{\frac{1}{2}} m G_{0}^{\frac{1}{2}},
    \end{align*}
which is the desired factorization.
\end{proof}

We refer the reader to \cite{PisierQuanhua2003} for more information about Hilbert valued operator spaces.

\begin{acknowledgements}
We thank L. Cadilhac for his comment about the $L_q$-mean Dini condition,  which helped us improve this paper.
\end{acknowledgements}

\vskip1cm

\bibliographystyle{amsplain}
\bibliography{rf}

\end{document}